\newtheorem{thm}{Theorem}[section]    
\newtheorem{lem}[thm]{Lemma}          
\newtheorem{prp}{Proposition}
\newtheorem{inpA}{Proposition}
\newtheorem{inpB}{Proposition}
\newtheorem{inpC}{Proposition}
\newtheorem{inpD}{Proposition}
\newtheorem{intt}{Theorem}
\theoremstyle{definition}
\newtheorem{defn}[thm]{Definition}    
\newtheorem*{rem}{Remark}             
\newtheorem{df}{Definition}[section]
\newtheorem{exam}[df]{Example}
\newcommand{\Map}{\mathop{\mathrm{Map}}\nolimits}
\begin{document}
\begin{center}
	\Large{Loop homology of some global quotient orbifolds}
\end{center}
\begin{center}{Yasuhiko Asao}\end{center}

\begin{abstract}
We determine the ring structure of the loop homology of some global quotient orbifolds. We can compute by our theorem the loop homology ring with suitable coefficients of the global quotient orbifolds of the form $[M/G]$ for $M$ being some kinds of homogeneous manifolds, and $G$ being a finte subgroup of a path connected topological group $\mathcal{G}$ acting on $M$. It is shown that these homology rings split into the tensor product of the loop homology ring of the manifold $\mathbb{H}_{*}(LM)$ and that of the classifying space of the finite group,which coincides with the center of the group ring $Z(k[G])$. 
\end{abstract}

\section{Introduction}
The free loop space of a topological space $X$ is a space of the continuous maps from the circle $S^{1}$ to $X$, 
\begin{equation}
LX = \Map (S^{1},X)
\end{equation}
with the compact-open topology.
The loop homology of $X$ is the homology of free loop space $H_{*}(LX)$. In the 1990's Moira Chas and Dennis Sullivan discovered a product $\circ$ on the loop homology of a closed oriented smooth manifold $H_{*}(LM)$
\begin{equation}
\circ : H_{p}(LM) \otimes H_{q}(LM) \longrightarrow H_{p+q-\dim M}(LM)
\end{equation}
called the loop product which is a mixture of the intersection product of a manifold and the concatenation operation identifying $S^{1}\vee S^{1}$ with the image of a map from $S^{1}$ (Chas, Sullivan \cite{CS}). They also showed in \cite{CS} that the product defines a ring structure and a kind of Lie algebra structure called the Batalin-Vilkovisky algebra (BV- algebra) structure on the homology. These rich algebraic structures are called \textit{string topology} of closed oriented manifolds. The string topology of a manifold is related to several areas of mathematics including mathematical physics through the tools of algebraic topology.

The string topology of wider class of spaces has also developed by several authors. 
\begin{itemize}
\item For classifyng spaces of connected compact Lie groups, the existence of the loop product and the BV-structure on its homology is proved by Chataur and Menichi in \cite{CM}.
\item For fiberwise monoids including the adjoint bundle of principal bundles, the loop product is constructed by Gruher and Salvatore in \cite{GS}.
\item And more generaly, for the Borel construction of a smooth manifold with a smooth action of a compact Lie group, the loop product is constructed by Kaji and Tene in \cite{KT}.
\item In the 2000's, Lupercio, Uribe, and Xicot\'{e}ncatl defined the loop homology of global quotient orbifold which is an orbifold of the form $[M/G]$ for $M$ being a smooth manifold and $G$ being a finite group acting smoothly on $M$, as the loop homology of the Borel construntion $M\times_{G}EG$. They discovered a product
\begin{equation}
\circ : H_{p}(L(M\times_{G} EG)) \otimes H_{q}(L(M\times_{G} EG)) \longrightarrow H_{p+q-\dim M}(L(M\times_{G}EG))
\end{equation}
on the loop homology of a global quotient orbifold $H_{*}(L(M\times_{G} EG))$, and showed that the product defines on $H_{*}(L(M\times_{G} EG))$ a BV-algebra structure  \cite{LUX}. They coined this structure with the name \textit{orbifold string topology}. 
\end{itemize}
In spite of these interesting structural discoveries, concrete computations of loop homology are achieved for only a few kinds of classes of manifolds. In order to accomodate the change in grading, we define $\mathbb{H}_{*}(LM) = H_{*+\dim M}(LM) $.
\begin{itemize}
\item For a compact Lie group $\Gamma$, there is a homeomorphism $L\Gamma \cong \Omega \Gamma \times \Gamma $, and a loop homology ring isomorphism $\mathbb{H}_{*}(L\Gamma) \cong H_{*}(\Omega \Gamma) \otimes \mathbb{H}_{*}(\Gamma) $ . Furthermore, the BV--algebra structure with coeffcients in $\mathbb{Q}$ and $\mathbb{Z}_{2}$ of any compact Lie group $G$ is determined by Hepworth in \cite{He1}.
\item For spheres $S^{n}$ and complex projective spaces $CP^{n}$, the ring structure with coefficient in $\mathbb{Z}$ is determined by Cohen, Jones, and Yan in \cite{CJY} by constructing the spectral sequence converging to the loop homology.
\item The BV-algebra structure for $CP^{n}$ with coefficients in $\mathbb{Z}$  is determined by Hepworth in \cite{He2}. 
\item For complex Stiefel manifolds $SU(n)/SU(k)$ including odd dimensional spheres $S^{2n+1}$, the BV- structure is determined by Tamanoi in \cite{T}.
\item  For arbitary spheres, Menichi determines the BV-structure for it in \cite{Me1} by using the Hochschild cohomology.
\item For the aspherical manifold $K(\pi ,1)$, the BV-struture is determined by Vaintrob in \cite{Vai} by establishing an isomrphism between the loop homology $\mathbb{H}_{*}(K(\pi,1))$ and the Hochschild cohomology $HH^{*}(\mathbb{Z}[\pi];\mathbb{Z}[\pi])$, and another proof is obtained by Kupers in \cite{Ku}.
\end{itemize}
The purpose of this paper is to determine the ring structure of some global quotient orbifolds by using the method of the orbifold string topology. We can compute by our theorem the loop homology ring with suitable coefficients of the global quotient orbifolds of the form $[M/G]$ for $M$ being a homogeneous manifold of a connected Lie group $\mathcal{G}$, and $G$ being a finte subgroup of $\mathcal{G}$. 

Now we briefly review a part of the work of Lupercio-Uribe-Xicot\'{e}ncatl in \cite{LUX}. For simplicity, we denote $\Map(S^{1},M\times_{G} EG)$ as $L[M/G]$. In \cite{LUX}, the $\it{loop \ orbifold}$  of the global quotient orbifold $[M/G]$ is defined as the groupoid $P_{G}M \rtimes G$, and they show that its Borel construction $P_{G}M\times_{G} EG$ is weak homotopy equivalent to the free loop space $L(M\times_{G}EG)$. To determine the loop homology ring of the lens space $S^{2n+1}/\mathbb{Z}_{p}$, they constructed a \textit{non $G$-equivariant} homotopy equivalence
\begin{equation}
P_{\mathbb{Z}_{p}}S^{2n+1} \simeq \displaystyle \coprod _{g\in \mathbb{Z}_{p}} LS^{2n+1}
\end{equation}
by using the fact that the action of $\mathbb{Z}_{p}$ extends to an $S^{1}$ action. The following is Proposition \ref{prp4} in this paper.
\begin{inpA}[\cite{LUX} ] 
	Let $M$ be a closed oriented manifold, and $G$ be a finite subgroup of a path connected group $\mathcal{G}$ acting continuously on $M$. Then for each $g \in G$, there exists a homotopy equivalence
	\begin{equation}
	P_{g}X \simeq LM  .
	\end{equation}
\end{inpA}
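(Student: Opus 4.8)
The plan is to construct an explicit homeomorphism $\Phi\colon P_{g}M \to LM$, which is then a fortiori a homotopy equivalence, by using a path in $\mathcal{G}$ from the identity to $g$ to ``untwist'' the twisted boundary condition. Recall that $P_{g}M$ is the space of paths $\gamma\colon [0,1]\to M$ satisfying $\gamma(1)=g\cdot\gamma(0)$, topologized as a subspace of $\Map([0,1],M)$ with the compact-open topology, and that $LM=\Map(S^{1},M)$ is identified with the space of paths $\delta\colon[0,1]\to M$ with $\delta(0)=\delta(1)$. Since $\mathcal{G}$ is path connected I would first fix a path $\alpha\colon[0,1]\to\mathcal{G}$ with $\alpha(0)=e$ and $\alpha(1)=g$; since inversion in the topological group $\mathcal{G}$ is continuous, $t\mapsto\alpha(t)^{-1}$ is continuous as well.

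Next I would set $\Phi(\gamma)(t)=\alpha(t)^{-1}\cdot\gamma(t)$ and define a candidate inverse $\Psi\colon LM\to P_{g}M$ by $\Psi(\delta)(t)=\alpha(t)\cdot\delta(t)$. The boundary conditions are immediate: for $\gamma\in P_{g}M$ one has $\Phi(\gamma)(0)=\gamma(0)$ and $\Phi(\gamma)(1)=g^{-1}\cdot\gamma(1)=g^{-1}g\cdot\gamma(0)=\gamma(0)$, so $\Phi(\gamma)\in LM$; and for $\delta\in LM$ one has $\Psi(\delta)(1)=g\cdot\delta(1)=g\cdot\delta(0)$, so $\Psi(\delta)\in P_{g}M$. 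Continuity of $\Phi$ and $\Psi$ follows from the exponential law: since $[0,1]$ is locally compact Hausdorff it suffices to check that the adjoint maps $(\gamma,t)\mapsto\alpha(t)^{-1}\cdot\gamma(t)$ and $(\delta,t)\mapsto\alpha(t)\cdot\delta(t)$ are continuous, and each is a composite of the jointly continuous evaluation map, the continuous map $t\mapsto\alpha(t)^{\pm1}$, and the continuous action $\mathcal{G}\times M\to M$. A direct computation gives $\Psi\circ\Phi=\mathrm{id}$ and $\Phi\circ\Psi=\mathrm{id}$, so $\Phi$ is a homeomorphism.

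I do not expect a genuine obstacle here; the only points that need care are bookkeeping ones. First, one must match the chosen model of $P_{g}M$ with a correspondingly chosen $\alpha$: in the quasi-periodic model where $P_{g}M$ consists of maps $\gamma\colon\mathbb{R}\to M$ with $\gamma(t+1)=g\cdot\gamma(t)$, one extends $\alpha$ to $\mathbb{R}$ by $\alpha(t+n)=g^{n}\alpha(t)$, which is well defined and continuous, and the same formulas for $\Phi$ and $\Psi$ then respect the periodicity relations. Second, the exponential-law step should be stated precisely rather than waved through. Finally, it is worth emphasising, since this is exactly the phenomenon exploited later, that the homeomorphism $\Phi$ depends on the choice of $\alpha$ and is \emph{not} $G$-equivariant, so assembling the maps $\Phi$ over all $g\in G$ yields only a non-equivariant equivalence $P_{G}M\simeq\coprod_{g\in G}LM$; moreover the hypotheses that $M$ be closed and oriented are not used in this statement at all, only the continuous $\mathcal{G}$-action, and they enter the paper elsewhere through the loop product.
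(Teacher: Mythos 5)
Your proof is correct, but it takes a genuinely different route from the paper's. The paper fixes a path $\theta_{g}$ in $\mathcal{G}$ from the unit to $g$ and defines $\tau_{g}\colon P_{g}M\to LM$ and $\eta_{g}\colon LM\to P_{g}M$ by \emph{concatenation}: $\tau_{g}$ traverses $\sigma$ at double speed and then follows the trace $\sigma(1)\check{\theta_{g}}$ back, and $\eta_{g}$ does the reverse; these are only homotopy inverses ($\tau_{g}\circ\eta_{g}\simeq \mathrm{id}$, $\eta_{g}\circ\tau_{g}\simeq \mathrm{id}$), not inverse maps. You instead untwist \emph{pointwise} by $\alpha(t)^{\pm 1}$, which yields a strict homeomorphism --- a stronger conclusion obtained with the same input (a path from $e$ to $g$, joint continuity of the action, the exponential law). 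This is essentially the mechanism Lupercio--Uribe--Xicot\'{e}ncatl use for lens spaces via the ambient $S^{1}$-action, so your argument is closer to their original one than to this paper's. Two small points of bookkeeping: the paper's convention is a right action, $P_{g}M=\{\sigma\mid \sigma(1)=\sigma(0)g\}$, so your formulas should read $\Phi(\gamma)(t)=\gamma(t)\cdot\alpha(t)^{-1}$ and $\Psi(\delta)(t)=\delta(t)\cdot\alpha(t)$; and while either model proves this proposition, the paper's concatenation maps $\tau_{g},\eta_{g}$ are the ones reused verbatim in the later lemmas (the homology-level $G$-equivariance and the compatibility with the concatenation product $\gamma$), so if you adopt the pointwise model you would need to redo those compatibility checks for your $\Phi$, with correspondingly coherent choices of the paths $\alpha$ for $g$, $h$, $gh$ and $h^{-1}gh$. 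Your observations that the equivalence is not $G$-equivariant and that the closed oriented hypothesis is not used here are both correct and consistent with the paper.
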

 We prove that this homotopy equivalence can be extended to wider class of spaces. Furthermore, we find a condition so that the above homotopy equivalence is $G$-$\it{equivariant \ at \ homology \ level}$. 

Now we state our main theorem. Let $\mathcal{G}$ be a path connected group acting continuously on $M$, and $G$ a finite subgroup of $\mathcal{G}$ acting smoothly on $M$. Then there is the map $\Phi : \Omega\mathcal{G} \times LM \longrightarrow LM$, with $\Phi (a,l) = (t \mapsto a(t)\cdot l(t))$, and this map induces an action of the Pontrjagin ring $H_{*}(\Omega \mathcal{G})$, namely $\Phi_{*} : H_{*}(\Omega \mathcal{G})\otimes H_{*}(LM) \longrightarrow H_{*}(LM)$. If the action $\Phi_{*}$ of $H_{0}(\Omega\mathcal{G};k)$ on $H_{*}(LM;k)$ satisfies the equation $\Phi_{*}(x) = x$, for any $x \in H_{*}(LM)$, we call the action $\Phi_{*}$  \textit{trivial}. Then we prove the following, which is Proposition \ref{prp} in this paper.
	\begin{inpB}
Assume that the action $\Phi_{*}$ is trivial with coefficient in $k$. Then the direct sum of the homotopy equivalence of the above proposition
	\[
	\displaystyle \coprod _{g \in G} P_{g}M \simeq \displaystyle \coprod _{g \in G}LM
	\]
	is $G$-equivariant at homology level with coefficients in $k$.
\end{inpB}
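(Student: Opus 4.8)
The plan is to reduce the assertion to one commuting square for each pair of elements of $G$, and then to verify that square on homology by an explicit manipulation of the paths defining the equivalences of the preceding proposition. First I would fix notation: for $\gamma\in P_gM$ I use the convention $\gamma(1)=g\cdot\gamma(0)$, and I recall that the homotopy equivalence of the preceding proposition is obtained by ``untwisting'' with a chosen path $a_g$ in $\mathcal G$ with $a_g(0)=e$, $a_g(1)=g$, namely
\[
\phi_g=\phi_g^{a_g}\colon P_gM\xrightarrow{\ \simeq\ }LM,\qquad \phi_g^{a_g}(\gamma)(t)=a_g(t)^{-1}\cdot\gamma(t),
\]
which depends on the choice of $a_g$. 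The group $G$ acts on $\coprod_{g\in G}P_gM$ by $\gamma\mapsto m_h(\gamma)$, $m_h(\gamma)(t)=h\cdot\gamma(t)$, carrying the $g$-summand to the $hgh^{-1}$-summand, and on $\coprod_{g\in G}LM$ by the same formula on loops, again shifting the index by conjugation. With these descriptions, ``$G$-equivariant at homology level with coefficients in $k$'' unwinds to: for all $g,h\in G$ the square with horizontal maps $(\phi_g)_*$ and $(\phi_{hgh^{-1}})_*$ and vertical maps $(m_h)_*$ on source and target commutes after applying $H_*(-;k)$. It therefore suffices to fix one pair $(g,h)$.

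The main step is a path computation. I would note first that $t\mapsto h\,a_g(t)\,h^{-1}$ is again a path from $e$ to $hgh^{-1}$ in $\mathcal G$, and that the explicit formulas give the \emph{strict} identity of maps $P_gM\to LM$
\[
\phi_{hgh^{-1}}^{\,h a_g h^{-1}}\circ m_h=m_h\circ\phi_g^{a_g}.
\]
Next, for any two paths $a,a'$ from $e$ to $g$ in $\mathcal G$ one checks directly that $\phi_g^{a'}=\Phi_b\circ\phi_g^{a}$, where $b(t)=a'(t)^{-1}a(t)$ is a loop in $\mathcal G$ based at $e$ and $\Phi_b\colon LM\to LM$ denotes the restriction of $\Phi$ to $\{b\}\times LM$. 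Applying this with $a'=a_{hgh^{-1}}$ and $a=h a_g h^{-1}$, and composing with the strict identity, gives
\[
\phi_{hgh^{-1}}^{\,a_{hgh^{-1}}}\circ m_h=\Phi_b\circ m_h\circ\phi_g^{a_g},\qquad b(t)=a_{hgh^{-1}}(t)^{-1}\,h\,a_g(t)\,h^{-1}.
\]

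To finish I would pass to $H_*(-;k)$. The right-hand vertical map of the square, being the action of $h\in\mathcal G$ on $LM$ itself, induces the identity on $H_*(LM;k)$ because $\mathcal G$ is path connected; likewise $(\Phi_b)_*(x)=\Phi_*([b]\otimes x)$ for the class $[b]\in H_0(\Omega\mathcal G;k)$ of the path component of $b$, which is the identity by triviality of $\Phi_*$. Taking $H_*(-;k)$ of the last displayed identity and using these two facts yields $(\phi_{hgh^{-1}})_*\circ(m_h)_*=(\phi_g)_*$, where the left-hand $(m_h)_*$ is the source action (merely the isomorphism $H_*(P_gM;k)\cong H_*(P_{hgh^{-1}}M;k)$ induced by the homeomorphism $m_h$, which we do not need to understand further); since the right-hand vertical is the identity, this is precisely the commutativity of the square. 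Summing over $g\in G$ gives the claimed $G$-equivariance at homology level.

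The place where the hypothesis is genuinely needed is the gap between the chosen paths $a_{hgh^{-1}}$ and the conjugates $h a_g h^{-1}$: in general one cannot choose the family $\{a_g\}_{g\in G}$ compatibly with conjugation — such a choice would require, for each conjugacy class, a path fixed by conjugation by the centralizer, which need not exist — so the direct sum of the $\phi_g$ cannot be made \emph{strictly} $G$-equivariant, and the triviality of $\Phi_*$ is exactly what kills the resulting discrepancy $\Phi_b$ on homology. I would also take some care that $b$ is only well defined up to based homotopy (and up to trading pointwise products of paths for concatenations), but this is harmless, since only the class of $b$ in $\pi_0(\Omega\mathcal G)$, equivalently in $H_0(\Omega\mathcal G;k)$, enters the argument.
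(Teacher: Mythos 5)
Your argument is correct and follows essentially the same route as the paper's proof of its Proposition 3.2: reduce to one square for each pair $(g,h)$, use path-connectedness of $\mathcal{G}$ to make the action of $h$ on $H_{*}(LM;k)$ the identity, and identify the remaining discrepancy as $\Phi_{*}([b]\otimes -)$ for an explicit loop $b$ in $\mathcal{G}$ comparing the chosen path for the conjugate element with the conjugate of the chosen path, which the triviality hypothesis kills. The only cosmetic difference is that you untwist pointwise by $a_{g}(t)^{-1}$, whereas the paper's equivalence $\tau_{g}$ is a concatenation map; the two are homotopic (interpolate the reparametrizations linearly), so they induce the same maps on homology and your conclusion applies verbatim to the paper's $\tau$.
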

By using this homotopy equivalence, we can compute the loop homology of certain class of orbifolds. Our main theorem is the following, Theorem \ref{thm} in this paper. We denote the order of a group $G$ by $|G|$.
\begin{intt}
	Let $\mathcal{G}$ be a path connected topological group acting continuously on an oriented closed manifold $M$, $G$ be its finite subgroup, and $k$ be a field whose characteristic is coprime to $|G|$.
	If the action $\Phi_{*}$ is trivial with coefficient in $k$, then there exists an isomorphism as $k$-algebras
	\begin{equation}\label{iso}
	\mathbb{H}_{*}(L[M/G];k) \cong \mathbb{H}_{*}(LM;k) \otimes Z(k[G]) ,
	\end{equation}
	where $Z(k[G])$ denotes the center of the group ring $k[G]$. 
	\end{intt}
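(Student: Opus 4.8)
The plan is to identify the loop orbifold $L[M/G]$ with the Borel construction of $P_G M \rtimes G$, compute the homology of $P_G M$ using the (homology-level) $G$-equivariant decomposition $\coprod_{g\in G} P_g M \simeq \coprod_{g\in G} LM$ coming from Proposition \ref{prp}, and then take $G$-invariants, which is exact because $\operatorname{char} k$ is coprime to $|G|$. Concretely, since $P_G M = \coprod_{g \in G} P_g M$ and the Borel construction of a disjoint union with permutation-type $G$-action computes as a homotopy orbit, there is an isomorphism $H_*(L[M/G];k) \cong H_*(P_G M;k)_{hG} \cong \bigl(H_*(\coprod_{g} P_g M;k)\bigr)^G$. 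Under the decomposition of Proposition \ref{prp}, $H_*(\coprod_{g\in G} P_g M;k) \cong \bigl(\bigoplus_{g\in G} k\langle g\rangle\bigr)\otimes H_*(LM;k) \cong k[G]\otimes H_*(LM;k)$ as $k[G]$-modules, where $G$ acts on the first tensor factor by conjugation (because conjugation is how $G$ permutes the twisted sectors $P_g M$) and trivially — at homology level, by hypothesis — on $H_*(LM;k)$. Taking invariants gives $H_*(L[M/G];k) \cong \bigl(k[G]\bigr)^{G\text{-conj}} \otimes H_*(LM;k) = Z(k[G]) \otimes H_*(LM;k)$ as $k$-modules, after the degree shift $\mathbb{H}_* = H_{*+\dim M}$.

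The substantive point is to upgrade this additive isomorphism to an isomorphism of rings. First I would recall from \cite{LUX} that the orbifold loop product on $H_*(P_G M \times_G EG)$ is built from the Chas--Sullivan-type construction applied sector-by-sector: the product of a class in the $g$-sector with a class in the $h$-sector lands in the $gh$-sector, combining the intersection product on $M$ (along the appropriate fixed-point or evaluation submanifolds) with concatenation of loops, and then one passes to $G$-invariants. So on $k[G]\otimes H_*(LM;k)$ before taking invariants, the product should be exactly the tensor product of the group-ring multiplication on $k[G]$ with the Chas--Sullivan loop product on $H_*(LM;k)$ — provided the homotopy equivalences $P_g M \simeq LM$ of Proposition \ref{prp4} are compatible with these structure maps. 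This compatibility is the crux: I need to check that the equivalence $P_g M \simeq LM$, which is defined using a path in $\mathcal{G}$ from $e$ to $g$ and the multiplication map $\Phi$, intertwines (up to the triviality of $\Phi_*$) the evaluation maps and the intersection/concatenation data used to define the loop product. The triviality hypothesis on $\Phi_*$ is precisely what is needed here, since the group-ring multiplication $k\langle g\rangle \otimes k\langle h\rangle \to k\langle gh\rangle$ has no "twisting" only when the $\pi_0(\Omega\mathcal{G})$-worth of choices acts trivially.

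The main obstacle I anticipate is exactly this last verification — showing the ring (not just module) identification — because it requires tracking the intersection-product part of the orbifold loop product through the non-canonical equivalences of Proposition \ref{prp4} and confirming it becomes the plain Chas--Sullivan product on each factor with the group multiplication on the finite part. I would handle it by choosing the equivalences coherently (e.g.\ fixing paths $\gamma_g$ in $\mathcal{G}$ from $e$ to each $g$, with $\gamma_e$ constant, and comparing $\gamma_g \cdot \gamma_h$ with $\gamma_{gh}$ via a nullhomotopy in $\mathcal{G}$, which exists and is homologically trivial on $\Omega\mathcal{G}$ by the triviality of $\Phi_*$), and then appealing to naturality of the umkehr maps defining the loop product. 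Once the ring structure on $k[G]\otimes H_*(LM;k)$ is pinned down as the tensor product ring, passing to $G$-invariants is a ring homomorphism, $\bigl(k[G]\otimes H_*(LM;k)\bigr)^G = Z(k[G])\otimes H_*(LM;k)$ since $H_*(LM)$ carries the trivial action and invariants distribute over the tensor product with a trivial-action factor, and reinstating the $\dim M$ shift yields the claimed isomorphism $\mathbb{H}_*(L[M/G];k) \cong \mathbb{H}_*(LM;k)\otimes Z(k[G])$ of $k$-algebras.
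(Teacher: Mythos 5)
Your proposal is correct and follows essentially the same route as the paper: identify $H_*(L[M/G];k)$ with the $G$-invariants of $H_*(\coprod_g P_g M;k)$ via the transfer, use the homology-level $G$-equivariant equivalences $P_gM\simeq LM$ to get $Z(k[G])\otimes H_*(LM;k)$ additively, and then verify multiplicativity by comparing the sectorwise concatenation $P_gM\times_M P_hM\to P_{gh}M$ with the Chas--Sullivan concatenation through the chosen paths $\theta_g$, where the discrepancy is a loop in $\mathcal{G}$ acting trivially on $H_*(LM)$ by the triviality of $\Phi_*$, together with naturality of the umkehr and Thom maps. One small caveat: the comparison loop need not be nullhomotopic in $\mathcal{G}$ (only its $\Phi_*$-action is trivial), and the paper additionally records a reparametrization lemma identifying the auxiliary concatenation $\gamma'$ with the standard one, but these are refinements of the same argument.
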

	We show that the following are the necessary conditions for the action $\Phi_{*}$ being trivial. They are Propositon \ref{ex1} and Proposition \ref{ex2} in this paper.
\begin{inpD}
If $\mathcal{G}$ is simply connected, then the action $\Phi_{*}$ is trivial for any field $k$.
	\end{inpD}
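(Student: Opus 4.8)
The plan is to reduce the statement to a one-line computation on the subspace of constant loops. First I would record the standard natural identification $\pi_0(\Omega\mathcal{G}) \cong \pi_1(\mathcal{G})$; since $\mathcal{G}$ is simply connected this set is a single point, so $\Omega\mathcal{G}$ is path connected. Consequently, for any field $k$, the degree-zero homology $H_0(\Omega\mathcal{G};k)$ is one dimensional, and it is spanned by the class $[\iota]$ of the constant loop $\iota \in \Omega\mathcal{G}$ at the identity element $e \in \mathcal{G}$. Moreover $\iota$ is the strict unit of the $H$-space structure on $\Omega\mathcal{G}$, so $[\iota]$ is the unit of the Pontrjagin ring $H_*(\Omega\mathcal{G};k)$; in particular "the action of $H_0(\Omega\mathcal{G};k)$" appearing in the statement is, in this case, exactly the map $x \mapsto \Phi_*([\iota]\otimes x)$.

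Next I would compute $\Phi$ restricted to $\{\iota\}\times LM$. By the defining formula $\Phi(a,l) = (t\mapsto a(t)\cdot l(t))$ we get $\Phi(\iota,l)(t) = e\cdot l(t) = l(t)$, so $\Phi|_{\{\iota\}\times LM}$ is literally the identity map of $LM$. Passing to homology, $\Phi_*([\iota]\otimes x) = x$ for every $x \in H_*(LM;k)$. Since $[\iota]$ spans $H_0(\Omega\mathcal{G};k)$, the induced action of $H_0(\Omega\mathcal{G};k)$ on $H_*(LM;k)$ is the identity action, i.e. it is trivial in the sense defined above. (Equivalently, one may phrase this without singling out $[\iota]$: because $\Omega\mathcal{G}$ is path connected, any $a\in\Omega\mathcal{G}$ is joined to $\iota$ by a path, whence $\Phi(a,-)\colon LM \to LM$ is homotopic to $\Phi(\iota,-) = \mathrm{id}_{LM}$ and therefore every degree-zero class acts as the identity.)

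Since the argument is essentially a direct computation, there is no genuine obstacle here; the only points that deserve a sentence of care are the identification $\pi_0(\Omega\mathcal{G})\cong\pi_1(\mathcal{G})$ — so that simple connectivity really does force $\Omega\mathcal{G}$ to be connected and $H_0(\Omega\mathcal{G};k)$ to be one dimensional over any field — and the remark that the constant-loop class is the multiplicative unit of the Pontrjagin ring, which is what guarantees that the action in question is literally the identity and not merely multiplication by a scalar.
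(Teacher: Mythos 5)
Your argument is correct and is essentially the paper's own proof, which simply observes that $H_{0}(\Omega\mathcal{G};k)$ is spanned by the class of the identity loop and that this class acts as the identity; you have merely spelled out the intermediate steps (the identification $\pi_{0}(\Omega\mathcal{G})\cong\pi_{1}(\mathcal{G})$ and the literal computation $\Phi(\iota,l)=l$) more explicitly.
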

\begin{inpD}
If $H_{\dim M}(LM;k) = k$, then the action $\Phi_{*}$ is trivial for any pair $(\mathcal{G}, G)$.
	\end{inpD}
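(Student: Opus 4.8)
The plan is to show that, for every based loop $a$ in $\mathcal{G}$, the self-map $\Phi_a\colon LM\to LM$, $l\mapsto(t\mapsto a(t)\cdot l(t))$, induces the identity on $H_*(LM;k)$; this suffices, since $H_0(\Omega\mathcal{G};k)$ is spanned by the classes of points $a\in\Omega\mathcal{G}$ and $\Phi_*$ applied to such a class is exactly $(\Phi_a)_*$, so triviality of the $H_0(\Omega\mathcal{G};k)$-action is precisely the assertion that every $(\Phi_a)_*$ is the identity. Observe first that $\Phi_a$ is a homeomorphism, with inverse $\Phi_{a^{-1}}$ (where $a^{-1}(t)=a(t)^{-1}$), so $\psi:=(\Phi_a)_*$ is an automorphism of $H_*(LM;k)$; I will regard it as an automorphism of the loop homology $\mathbb{H}_*(LM;k)=H_{*+\dim M}(LM;k)$. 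The mechanism is that $\psi$, although \emph{not} multiplicative for the loop product, is ``twisted multiplicative'', and the hypothesis $H_{\dim M}(LM;k)=k$ makes this force $\psi$ to be idempotent.

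First I would record an elementary identity of maps out of the figure-eight space $LM\times_M LM=\{(l_1,l_2):l_1(0)=l_2(0)\}$:
\[
\Phi_{a*a}\circ\mu=\mu\circ(\Phi_a\times\Phi_a),
\]
where $\mu$ is concatenation and $a*a$ is the based loop traversing $a$ twice: both sides send $(l_1,l_2)$ to the loop equal to $t\mapsto a(2t)l_1(2t)$ on $[0,\tfrac12]$ and to $t\mapsto a(2t-1)l_2(2t-1)$ on $[\tfrac12,1]$. Because $\Phi_a$ commutes with evaluation at $0$ (that is, $\mathrm{ev}\circ\Phi_a=\mathrm{ev}$), the homeomorphism $\Phi_a\times\Phi_a$ of $LM\times LM$ lies over $\mathrm{id}_{M\times M}$ via $\mathrm{ev}\times\mathrm{ev}$, hence preserves the subspace $LM\times_M LM=(\mathrm{ev}\times\mathrm{ev})^{-1}(\Delta_M)$ and is compatible with its normal bundle (pulled back from the diagonal $\Delta_M\subset M\times M$). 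Therefore it is compatible with the umkehr (Gysin) map $\tau\colon H_*(LM\times LM)\to H_{*-\dim M}(LM\times_M LM)$ of that embedding, and since the loop product is $x\circ y=\mu_*\tau(x\times y)$ the two displayed facts give
\[
\psi(x)\circ\psi(y)=(\Phi_{a*a})_*(x\circ y)=\psi^{2}(x\circ y)
\]
for all $x,y\in\mathbb{H}_*(LM;k)$; the last equality holds because $(\Phi_b)_*$ depends only on the component of $b$, this assignment is a homomorphism $\pi_0(\Omega\mathcal{G})=\pi_1(\mathcal{G})\to\mathrm{Aut}\,H_*(LM;k)$, and $a*a$ lies in the component $2[a]$.

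Next I would use the hypothesis. Let $u=c_*[M]\in H_{\dim M}(LM;k)=\mathbb{H}_0(LM;k)$ be the image of the fundamental class under the constant-loop inclusion $c\colon M\to LM$; by Chas--Sullivan this is the unit of the loop product. Then $\psi(u)=(\Phi_a\circ c)_*[M]$, and since $\mathrm{ev}\circ\Phi_a=\mathrm{ev}$ and $\mathrm{ev}\circ c=\mathrm{id}_M$ we get $\mathrm{ev}_*\psi(u)=[M]\neq0$ in $H_{\dim M}(M;k)$; as $H_{\dim M}(LM;k)=k$ is generated by $u$ with $\mathrm{ev}_*u=[M]$, this forces $\psi(u)=u$. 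Feeding $y=u$ into the twisted multiplicativity then gives, for every $x$,
\[
\psi(x)=\psi(x)\circ u=\psi(x)\circ\psi(u)=\psi^{2}(x\circ u)=\psi^{2}(x),
\]
so $\psi^2=\psi$; being invertible, $\psi=\mathrm{id}$. Hence $(\Phi_a)_*$ is the identity on $H_*(LM;k)$ for every based loop $a$, and the action $\Phi_*$ is trivial.

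The step I expect to require the most care is the compatibility of $\Phi_a\times\Phi_a$ with the umkehr map $\tau$: one must check that the Pontryagin--Thom construction defining the loop product can be taken equivariantly for this particular self-homeomorphism over $\mathrm{id}_{M\times M}$, i.e.\ that $\Phi_a\times\Phi_a$ carries a tubular neighbourhood of $LM\times_M LM$ to one compatibly with the Thom class. Morally this is just naturality of Gysin maps for maps over the base, but in the function-space setting it is where the geometric content sits; equivalently, since $\Phi_a$ is a map over $M$ through $\mathrm{ev}$ it induces a self-map of the Thom spectrum $LM^{-TM}$, and one checks that the displayed figure-eight identity lifts to these spectra. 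Everything else --- the figure-eight identity, $\psi(u)=u$, and the idempotency argument --- is then formal.
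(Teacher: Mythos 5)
Your proof is correct, and its first half --- showing that $(\Phi_a)_*$ fixes the Chas--Sullivan unit $u=c_*[M]$ by composing with $\mathrm{ev}_*$ and using the hypothesis $H_{\dim M}(LM;k)=k$ --- is essentially identical to the paper's. The second half takes a genuinely different route. The paper imports a lemma of Hepworth (Lemma \ref{act}, from \cite{He2}) stating that the $H_0(\Omega\mathcal{G})$-action is by module maps for the loop product, $\alpha\cdot(x\circ y)=(\alpha\cdot x)\circ y=x\circ(\alpha\cdot y)$, and then concludes in one line: $\Phi_{a*}x=\Phi_{a*}([M]\circ x)=(\Phi_{a*}[M])\circ x=x$. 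You instead prove from scratch the ``doubled'' identity $\psi(x)\circ\psi(y)=\psi^{2}(x\circ y)$ by inserting $a$ into both lobes of the figure eight; this is formally weaker than Hepworth's statement, which is why you need the extra observations that $\psi$ is invertible and that an invertible idempotent is the identity. What your route buys is self-containedness: you cite nothing beyond the naturality of the Pontryagin--Thom map and Thom isomorphism for the self-map $\Phi_a\times\Phi_a$ of $LM\times LM$ over $\mathrm{id}_{M\times M}$, which is exactly the kind of naturality the paper itself invokes in Lemma \ref{lem5} and Proposition \ref{prp5}, so your ``step requiring the most care'' is on the same footing as the rest of the paper; your use of the Eckmann--Hilton identification of $(\Phi_{a*a})_*$ with $\psi^2$ is also correct. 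What the paper's route buys is brevity and the stronger module statement, which it reuses later in the proof of Proposition \ref{ex1}.
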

	\begin{inpC}
If the conditions
	\begin{itemize}
	\item[(i)] $M$ is simply connected,
	\item[(ii)] $|\pi_{1}\mathcal{G}| < \infty$,
	\item[(iii)] the homomorphism $H_{*}(\Omega M;k) \longrightarrow H_{*}(LM;k)$ induced by the inclusion $\Omega M \longrightarrow LM$ is injective, namely the free loop fibration $\Omega M \to LM \to M$ is Totally Non-Cohomologous to Zero (TNCZ) with coefficient in the field $k$,
	\item[(iv)] the characteristic of $k$ is coprime to $|\pi_{1}\mathcal{G}|$,
	\end{itemize}
	are satisfied, then the action $\Phi_{*}$ is trivial.
	\end{inpC}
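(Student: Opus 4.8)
The plan is to reduce the triviality of $\Phi_*$ to a statement about a finite group acting on $H_*(LM;k)$ and then to settle it by combining a Serre spectral sequence argument with an elementary fact about unipotent operators of finite order.

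First I would unwind the definition. As $H_0(\Omega\mathcal{G};k)$ is the $k$-span of $\pi_0(\Omega\mathcal{G})=\pi_1\mathcal{G}$, triviality of $\Phi_*$ means exactly that for every loop $\gamma$ in $\mathcal{G}$ based at $e$ the self-map $\phi_\gamma=\Phi(\gamma,-)\colon LM\to LM$, $\phi_\gamma(l)=(t\mapsto\gamma(t)\cdot l(t))$, induces the identity on $H_*(LM;k)$. Since $\phi_\gamma\circ\phi_{\gamma'}=\phi_{\gamma\cdot\gamma'}$ (pointwise product of loops in $\mathcal{G}$, a left action) and homotopic loops give homotopic maps, $\gamma\mapsto(\phi_\gamma)_*$ descends to a group homomorphism $\rho\colon\pi_1\mathcal{G}\to GL(H_*(LM;k))$. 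By hypothesis (ii), $\pi_1\mathcal{G}$ is finite, so for each $\gamma$ the operator $T:=\rho([\gamma])$ has finite order $m$ dividing $|\pi_1\mathcal{G}|$; by hypothesis (iv), $m$ is a unit of $k$ and $T^m=\mathrm{id}$.

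Next I would prove that $T$ is \emph{unipotent} by feeding $\phi_\gamma$ into the homology Serre spectral sequence of the free loop fibration $\Omega M\to LM\xrightarrow{\mathrm{ev}}M$. Because $\gamma(0)=\gamma(1)=e$ acts trivially on $M$, one has $\mathrm{ev}\circ\phi_\gamma=\mathrm{ev}$, so $\phi_\gamma$ is a fibrewise self-map over $\mathrm{id}_M$ and induces an endomorphism of the spectral sequence. Hypothesis (i) makes the local system trivial, whence $E_2^{p,q}=H_p(M;k)\otimes H_q(\Omega M;k)$; hypothesis (iii) forces the collapse $E_2=E_\infty$, so the resulting filtration $F_\bullet$ on $H_*(LM;k)$, which is finite in each homological degree, satisfies $\mathrm{gr}^F H_*(LM;k)\cong H_*(M;k)\otimes H_*(\Omega M;k)$. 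On the base $\phi_\gamma$ induces the identity, and on a fibre $\Omega_{x_0}M$ the standard reparametrisation (``shuffle'') homotopy identifies $\phi_\gamma|_{\Omega_{x_0}M}\colon l\mapsto(t\mapsto\gamma(t)\cdot l(t))$ with left concatenation by the orbit loop $t\mapsto\gamma(t)\cdot x_0$; as $M$ is simply connected this loop is based-homotopic to the constant loop, so that concatenation map is homotopic to $\mathrm{id}_{\Omega_{x_0}M}$. Hence $\phi_\gamma$ acts as the identity on $E_2=E_\infty\cong\mathrm{gr}^F H_*(LM;k)$, so $T-\mathrm{id}$ strictly lowers the filtration and $T$ is unipotent.

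Finally, a unipotent operator $T$ with $T^m=\mathrm{id}$ and $m$ a unit of $k$ must equal $\mathrm{id}$: its minimal polynomial divides $\gcd\bigl((x-1)^N,\,x^m-1\bigr)$, and since $x^m-1=(x-1)(x^{m-1}+\cdots+1)$ with the second factor equal to $m\neq 0$ at $x=1$, that gcd is $x-1$. Thus $(\phi_\gamma)_*=\mathrm{id}$ for every $\gamma$, i.e.\ $\Phi_*$ is trivial with coefficients in $k$. I expect the main obstacle to be the fibre computation together with the passage from the spectral sequence back to $H_*(LM;k)$ itself: collapse only gives for free that $\phi_\gamma$ acts as the identity on the associated graded, and it is exactly the finiteness (ii) and the coprimality (iv), through the linear-algebra step above, that upgrade ``unipotent'' to ``identity''. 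I would also be careful with the basepoint bookkeeping ensuring that $\phi_\gamma$ is genuinely a morphism of the Serre spectral sequence over $\mathrm{id}_M$, and with checking that the shuffle homotopy is continuous in the loop variable.
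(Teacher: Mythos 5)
Your proof is correct, but it takes a genuinely different route from the paper's. The paper leans on string topology: by Hepworth's Lemma \ref{act} the action of $H_{0}(\Omega\mathcal{G})$ is by $\mathbb{H}_{*}(LM)$-algebra maps, so triviality reduces to showing the single identity $\Phi_{a*}[M]=1$; hypotheses (i) and (iii) are then used to split $\mathbb{H}_{*}(LM)\cong H_{*}(\Omega M)\otimes\mathbb{H}_{*}(M)$ and to write $\Phi_{a*}[M]=1+(\text{nilpotent})$, with the unit coefficient pinned down by $ev_{*}$, while Lemma \ref{rng} (that $[a]\mapsto[a]\cdot[M]$ is a ring map) gives $(\Phi_{a*}[M])^{r}=1$. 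You instead work with the whole operator $T=(\phi_{\gamma})_{*}$ on $H_{*}(LM;k)$: finite order comes from the group homomorphism $\pi_{1}\mathcal{G}\to GL(H_{*}(LM;k))$ rather than from Lemma \ref{rng}, and unipotence comes from the Serre spectral sequence of $ev\colon LM\to M$ (identity over the base, and identity on the fibre because $\phi_{\gamma}|_{\Omega_{x_{0}}M}$ is homotopic to concatenation with the orbit loop, which is null since $\pi_{1}M=1$). Both proofs finish with the identical linear-algebra step $\gcd\bigl((x-1)^{N},x^{m}-1\bigr)=x-1$ when $m$ is invertible in $k$. Two remarks. First, your invocation of (iii) is inessential: a morphism of spectral sequences that is the identity on $E_{2}$ is the identity on $E_{\infty}$ whether or not the sequence collapses, so $T-\mathrm{id}$ strictly lowers the (finite) Serre filtration in any case; your argument therefore proves the proposition without the TNCZ hypothesis, which is strictly stronger than what the paper establishes. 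Second, your approach never uses the loop product, whereas the paper's reduction to $[M]$ genuinely depends on Lemmas \ref{act} and \ref{rng}; the price you pay is the fibrewise bookkeeping (that $\phi_{\gamma}$ is a map over $\mathrm{id}_{M}$ and that the interchange homotopy identifying $\phi_{\gamma}$ on the fibre with left concatenation is continuous in $l$), both of which you correctly flag and which do go through.
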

	The organization of this paper is as follows. After this introduction in section 1, we briefly review the string topology first developed in \cite{CS} and the orbifold string topology constructed in \cite{LUX} in section 2. In section 3, we show some propositions necessary for the proof of the main theorem. In section 4, we prove the theorem first for vector spaces and second for algebras. Finally in section 5, we compute concrete examples by applying our theorem.
\section*{Acknowledgment}
The author would like to express his sincere gratitude to his advisor Professor Toshitake Kohno for all advice and encouragement. He also would like to show his appreciation to Takahito Naito for fruitful communication.

\section{Preliminalies for String topology}
In this section, we briefly review the loop product in string topology. 
\subsection*{Loop product}
Let $M$ be a smooth closed oriented manifold, and $LM = \Map(S^{1}, M)$ be the free loop space of $M$, the space of piecewise smooth maps from $S^{1}$ to $M$ with compact open topology. Then we have the following pullback diagram
\begin{equation}
	\xymatrix{
	\Map(S^{1}\vee S^{1},M) \ar@{=}[r] & LM\times_{M}LM \ar[r]^{\tilde{\iota}} \ar[d]^{ev_{1/2}} &  LM\times LM \ar[d]^{ev\times ev} \\
	 & M \ar[r]^{\iota}  & M\times M ,
}
\end{equation}
where $\iota$ denotes the diagonal embbeding, and $ev_{t}$ denotes the evaluation map with $ev_{t}(l) = l(t)$.
Then we can consider above $\tilde{\iota}$ as codimension $n$ embbeding of the infinite dimensional manifold, and we have the generalized Pontrjagin-Thom map due to R.Cohen and Klein \cite{CK}
\begin{equation}
\tilde{\iota}_{!*} : H_{*}(LM\times LM) \longrightarrow H_{*}((LM\times_{M} LM)^{ev_{1/2}^{*}\nu_{\iota}})  ,
\end{equation}
where $\nu_{\iota}$ denotes the normal bundle of the embbeding $\iota$, and $(LM\times_{M} LM)^{ev_{1/2}^{*}\nu_{\iota}}$ denotes the Thom space of the vector bundle $ev_{1/2}^{*}\nu_{\iota}$. The similar but more homotopy theoretic construction of this umkehr map is considered in \cite{KT}.
The loop product is formulated in \cite{CJ} as the composition of maps $\tilde{\iota}_{!*}$, the Thom isomorphism
\begin{equation}
\xymatrix{
H_{*}((LM\times_{M} LM)^{ev_{1/2}^{*}\nu_{\iota}}) \ar[r]^(0.53){\cong} & H_{*\dim M}(LM\times_{M}LM)  ,
}
\end{equation}
and the concatenating map $\gamma : LM\times_{M}LM \longrightarrow LM $ with
\begin{equation}\label{con1}
\gamma(l_{1},l_{2}) = l_{1}*l_{2} :=
\begin{cases}
	l_{1}(2t) & (0 \leq t \leq 1/2) ,\\
	l_{2}(2t-1) & (1/2 \leq t \leq 1) .
	\end{cases}
\end{equation}
\begin{defn}[\cite{CJ}]\label{def1}
The loop product is defined by the following sequence of compositons
\begin{align*}
H_{p}(LM)\otimes H_{q}(LM) \xrightarrow{\times} \  H_{p+q}(LM\times LM) \xrightarrow{e_{!*}} \  H_{p+q}((LM\times_{M} LM)^{ev_{1/2}^{*}\nu_{\iota}}) \\
 \xrightarrow{Thom \ isom.} \  H_{p+q-n}(LM\times_{M}LM) \xrightarrow{\gamma_{*}} \  H_{p+q-n}(LM)  .
\end{align*}
\end{defn}
In order to accomodate the change in grading, we define $\mathbb{H}_{*}(LM) = H_{*+ \dim M}(LM)$.
Chas and Sullivan prove the following in \cite{CS}.
\begin{thm}[Chas-Sullivan \cite{CS}]\label{thm0}
The loop product makes $\mathbb{H}_{*}(LM)$ an associative graded commutative algebra.
\end{thm}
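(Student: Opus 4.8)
The plan is to verify the three algebra axioms — a two-sided unit, associativity, and graded commutativity — directly from the formulation of Definition \ref{def1}, reducing each to an elementary statement about the diagonal embedding of $M$ and about concatenation of loops. For the unit I would take $u \in \mathbb{H}_{0}(LM) = H_{n}(LM)$ to be the image of the fundamental class $[M]$ under the inclusion $c\colon M \hookrightarrow LM$ of constant loops. To see $u\circ a = a$ for $a\in H_{p}(LM)$, one checks that $M\times LM \xrightarrow{c\times 1} LM\times LM$ is transverse to $LM\times_{M}LM$ along a copy of $LM$, so the Pontrjagin--Thom map followed by the Thom isomorphism sends $[M]\times a$ to $a$ under the canonical identification of normal bundles, while $\gamma$ restricted to that copy is reparametrization-homotopic to the identity; the symmetric computation gives $a\circ u = a$. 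This step is routine.

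For associativity, the key idea is to realize both $(a\circ b)\circ c$ and $a\circ(b\circ c)$ as a single triple operation on $\Map(S^{1}\vee S^{1}\vee S^{1}, M)\cong LM\times_{M}LM\times_{M}LM$: one forms the pullback square attaching this space to $LM^{\times 3}$ over the relevant evaluations, whose vertical map is the codimension-$2n$ diagonal-type embedding $M\hookrightarrow M^{\times 3}$, and one composes the associated Pontrjagin--Thom map, the Thom isomorphism, and a triple concatenation $\gamma_{3}$. This triple product equals $(a\circ b)\circ c$ because $M\hookrightarrow M^{\times 3}$ factors as $M\hookrightarrow M\times M\hookrightarrow M^{\times 3}$ (diagonal into the first two slots, then diagonal of the resulting $M$ with the third slot), and one invokes: (i) the compatibility of the Pontrjagin--Thom collapse with composition of codimension-$n$ embeddings under the canonical splitting $\nu_{g\circ f}\cong f^{*}\nu_{g}\oplus\nu_{f}$, equivalently the multiplicativity of the Thom isomorphism; (ii) the naturality of the collapse under maps of pullback squares; and (iii) the reparametrization homotopy $\gamma_{3}\simeq\gamma\circ(\gamma\times_{M}1)$. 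Bracketing the other way uses the factorization into the other pair of slots together with $\gamma_{3}\simeq\gamma\circ(1\times_{M}\gamma)$. Assembling this into a diagram commuting on homology is the substance of the argument, and I expect it to be the main obstacle: packaging the interaction of iterated Pontrjagin--Thom collapses with composition of embeddings and with the cartesian squares into one honest commutative diagram.

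Finally, for graded commutativity I would compare $a\circ b$ with $b\circ a$ via the swap $\tau\colon LM\times LM\to LM\times LM$, which contributes the Koszul sign $(-1)^{pq}$ on homology; since $\tau$ is equivariant for the diagonal $\iota$ with respect to the swap on $M\times M$, and the accompanying normal-bundle identification costs a further universal sign, the Pontrjagin--Thom-plus-Thom part is $\tau$-equivariant up to that sign. It then suffices to note that $\gamma\circ\tau = R_{1/2}\circ\gamma$ on $LM\times_{M}LM$, where $R_{1/2}$ precomposes a loop with the rotation of $S^{1}$ by a half-turn, and that $R_{1/2}$ is homotopic to the identity of $LM$ through the rotations $R_{s}$, $s\in[0,1]$, so $\gamma_{*}\circ\tau_{*} = \gamma_{*}$; combining all signs with the regrading $|a| = p - \dim M$ yields $a\circ b = (-1)^{|a|\,|b|}\,b\circ a$. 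The delicate point here is tracking the Koszul and normal-bundle signs precisely enough to see that they collapse to exactly $(-1)^{|a|\,|b|}$ in the shifted grading.
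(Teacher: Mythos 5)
This theorem is stated in the paper as a cited result of Chas--Sullivan, and the paper gives no proof of it (the subsequent remark only points to the operadic proof of Cohen--Jones and Tamanoi's homotopy-theoretic one), so there is no in-paper argument to compare against line by line. Your outline is the standard Cohen--Jones-style proof adapted to the paper's Definition \ref{def1}: unit from the constant loops at $[M]$, associativity via the triple fiber product $LM\times_{M}LM\times_{M}LM$ and the two factorizations of the small diagonal $M\hookrightarrow M^{\times 3}$ together with naturality and multiplicativity of the Pontrjagin--Thom/Thom package, and commutativity via the swap plus the observation that $\gamma\circ\tau$ differs from $\gamma$ by the half-rotation $R_{1/2}$, which is homotopic to the identity through the circle action on $LM$. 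All of this is correct in outline, and you have honestly located the two places where the real work lives: (i) assembling the compatibility of iterated umkehr maps with composition of finite-codimension embeddings of infinite-dimensional manifolds into one commutative diagram (this is exactly what \cite{CK} and \cite{CJ} are for, and at the level of rigor of the present paper you may simply invoke their naturality statements, as the paper itself does in Lemma \ref{lem5} and Proposition \ref{prp5}); and (ii) the sign bookkeeping for graded commutativity in the shifted grading $\mathbb{H}_{*}=H_{*+n}$, where the Koszul sign of the swap, the orientation convention on the normal bundle of the diagonal, and the degree shift must be tracked simultaneously --- Chas--Sullivan do this at the chain level and Cohen--Jones absorb it into the commutative ring spectrum structure on $LM^{-TM}$. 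I would only caution that a complete write-up of step (ii) cannot be waved through as ``a further universal sign''; it either needs an explicit chain-level computation or an appeal to the spectrum-level commutativity, but the route you propose is sound.
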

\begin{rem}
Cohen - Jones gives in \cite{CJ} an operadic proof of Thorem \ref{thm0}, and Tamanoi gives in \cite{T2} more homotopy teoretic one.
\end{rem}
\subsection*{Orbifold}
In this section, we review the basic definitions and properties on orbifolds that we will use in this paper. Following Moerdijk \cite{Mo}, we use the groupoid notion of an orbifold. For more detail, see \cite{ALR}, \cite{Mo}.
\begin{defn}
A \emph{groupoid} is a category whose morphisms are all invertible. In other words, a groupoid ${\sf G}$ is a pair of sets $({\sf G_{0}},{\sf G_{1}})$ with the structure maps
\begin{description}
\item[source and target] $s,t : {\sf G_{1}} \longrightarrow {\sf G_{0}} $
\item[identity] $e : {\sf G_{0}} \longrightarrow {\sf G_{1}}$
\item[inverse] $i : {\sf G_{1}} \longrightarrow {\sf G_{1}}$
\item[composition] $m:{\sf G_{1}} \times_{{\sf G_{0}}} {\sf G_{1}} = \{(a, b) \in {\sf G_{1}\times {\sf G_{1}}} | t(a) = s(b)\}\longrightarrow {\sf G_{1}} $, 
\end{description}
satisfying suitable compatibilities.
\end{defn}
We note here some technical terms on groupoids. A \emph{Lie groupoid} is a groupoid $({\sf G_{0}}, {\sf G_{1}})$ such that ${\sf G_{0}}$ and ${\sf G_{1}}$ both have the structure of a smooth manifold, and the structure maps are all smooth. We will also require that $s,t$ are submersions. An \emph{isotropy group} of a groupoid $({\sf G_{0}}, {\sf G_{1}})$ at $x \in \sf{G_{0}}$ is the group $G_{x} = \{ f \in {\sf G_{1}}| s(f) = t(f) = x \}$. A Lie groupoid $({\sf G_{0}}, {\sf G_{1}})$ is said to be a \emph{proper groupoid} if the map $(s,t) : {\sf G_{1}} \longrightarrow {\sf G_{0}} \times {\sf G_{0}}$ is proper. A Lie groupoid $({\sf G_{0}}, {\sf G_{1}})$ is said to be a \emph{foliation groupoid} if the isotropy group $G_{x}$ is dicrete for each $x \in {\sf G_{0}}$.
\begin{defn}
A groupoid is said to be an \emph{orbifold groupoid} if it is a proper foliation Lie groupoid.
\end{defn}
\begin{exam}
Let $S$ be a set, and $\mathcal{G}$ be a group acting on $S$. Then $(S, S\times \mathcal{G})$ is a groupoid with structure maps
\begin{itemize}
\item $s(x, g) = x$,
\item $t(x, g) = gx$,
\item $e(x) = (x, id_{\mathcal{G}})$,
\item $i(x, g) = (gx, g^{-1})$,
\item$m((x, g), (gx, h)) = (x, hg)$  ,
\end{itemize} 
for any $x ,y \in S$ and $g, h \in \mathcal{G}$. We call this groupoid \emph{action groupoid}, and denote by $S \rtimes \mathcal{G}$.
\end{exam}
\begin{exam}
Let $M$ be a smooth manifold, and $\Gamma$ be a compact Lie group acting smoothly on $M$. If the isotropy group $\Gamma_{x}$ is finite for each $x \in M$, then the action groupoid $M \rtimes \Gamma$ is an orbifold groupoid.
\end{exam}
Orbifolds are defined as follows by using the notion of groupoids.
\begin{defn}
A Morita equivalent class $[{\sf G}]$ of orbifold groupoids is called an \emph{orbifold}.
\end{defn}
\begin{defn}
An orbifold ${\sf X}$ is called a \emph{global quotient orbifold} if ${\sf X}$ has a representation of an action groupoid $M \rtimes G$, where $M$ is a smooth manifold, and $G$ is a finite group. We denote ${\sf X} = [M/G]$.
\end{defn}
The following are fundamental properties of orbifolds. See for example \cite{ALR} for the proof.
\begin{rem}
Any orbifold groupoid is  Morita equivalent to an action groupoid $M\rtimes \Gamma$, where $M$ is a smooth manifold, and $\Gamma$ is a compact Lie group acting smoothly on $M$ with its istropy groups being finite.
\end{rem}
\begin{rem}
Let $\sf{X}$ $= [M\rtimes \Gamma]$ be an orbifold. Then the homotopy type of the Borel construction $M\times _{\Gamma} E\Gamma $ is an orbifold invariant, namely it is invariant under Morita equivalences.
\end{rem}
\subsection*{Orbifold loop product}
In this section, we review the construction of orbifold loop product defined in \cite{LUX}.

The following notion of loop orbifold is defined also in \cite{LUX}.
\begin{defn}[\cite{LUX}]\label{defn}
Let $[M/G]$ be a global quotient orbifold. The \emph{loop orbifold} of $[M/G]$ is the action groupoid $P_{G}M \rtimes G $, where
\begin{align*}
P_{G}M & = \displaystyle \coprod _{g \in G} P_{g}M ,\\
P_{g}M & = \{\sigma :[0,1] \longrightarrow M | \sigma (1) = \sigma (0)g \} \subset LM  ,
\end{align*}
and $G$ acts on $P_{G}M$ as
\[
P_{g}M \times G \ni (\sigma , h) \mapsto \sigma h \in P_{h^{-1}gh}M  .
\]
\end{defn}
In the same paper, they prove the weak homotopy equivalence $L(X\times_{G} EG) \simeq P_{G}X \times _{G} EG$. Hence we can consider $P_{G}M \times _{G} EG$ instead of $L(M\times_{G} EG)$ for studying the string topology of the orbifold $[M/G]$ because of Whitehead theorem. They construct the orbifold loop product as follows. 
\begin{proof}[Construction of the orbifold loop product]
We consider the following pullback diagram
\begin{equation}
\xymatrix{
P_{g}M\times_{M}P_{h}M \ar[r]^{\tilde{\iota}} \ar[d]^{ev_{1/2}} & P_{g}M\times P_{h}M \ar[d]^{ev_{1}\times ev_{0}} \\
M \ar[r]^{\iota} & M\times M ,
}
\end{equation}
for any $g,h \in G$, where $ev_{t}$ denote the evaluation map. Then we have the generalized Pontryagin Thom map
\[
\tilde{\iota}_{!} : P_{g}M\times P_{h}M \longrightarrow (P_{g}M\times_{M}P_{h}M)^{ev_{1/2}^{*}\nu_{\iota}}  ,
\]
where $\nu_{\iota}$ denotes the normal bundle of the embbeding $\iota$, and $(P_{g}M\times_{M}P_{h}M)^{ev_{1/2}^{*}\nu_{\iota}}$ denotes the Thom space of the vector bundle $ev_{1/2}^{*}\nu_{\iota}$. We also have the concatenation map  $\gamma : P_{g}M \times_{M}P_{h}M \longrightarrow P_{gh}M $ with
\begin{equation}
\gamma (\sigma_{g}, \sigma_{h}) = 
\begin{cases}
	\sigma_{g}(2t) & (0 \leq t \leq 1/2) ,\\
	\sigma_{h}(2t-1) & (1/2 \leq t \leq 1) .
	\end{cases}
\end{equation}
Then we obtain a sequence of compositions
\begin{align*}
H_{p}(P_{g}M)\otimes H_{q}(P_{h}M) \xrightarrow{\times} \  H_{p+q}(P_{g}M\times P_{h}M) \xrightarrow{\tilde{\iota}_{!*}} \  H_{p+q}((P_{g}M\times_{M} P_{h}M)^{ev_{1/2}^{*}\nu_{\iota}}) \\
 \xrightarrow{Thom \ isom.}  \  H_{p+q-n}(P_{g}M\times_{M}P_{h}M) \xrightarrow{\gamma_{*}} \  H_{p+q-n}(P_{gh}M)  ,
\end{align*}
and we obtain a map 
\begin{equation}
\bullet : H_{p}(P_{g}M)\otimes H_{q}(P_{h}M) \longrightarrow H_{p+q-n}(P_{gh}M).
\end{equation}
By taking summation over $g,h \in G$, we obtain a map which we also denote by $\bullet$
\begin{equation}
\bullet : H_{p}(P_{G}M)\otimes H_{q}(P_{G}M) \longrightarrow H_{p+q-n}(P_{G}M).
\end{equation}
To lift up the map $\bullet$ to
\begin{equation}
\circ : H_{p}(P_{G}M\times_{G}EG)\otimes H_{q}(P_{G}M\times_{G}EG) \longrightarrow H_{p+q-n}(P_{G}M\times_{G}EG), 
\end{equation}
we use the following fundamental lemma in algebraic topology.
\begin{lem} \label{prp1}
	Let $p : \tilde{X} \to X$ be a finite galois covering and $G$ be its galois group. If $k$ is a field whose character is coprime to $|G|$, there exists an injective homomorphism $\mu_{*}$ called the transfer map
	\begin{equation}\label{eq1}
	\mu_{*} : H_{*}(X ;k) \longrightarrow H_{*}(\tilde{X} ;k)  ,
	\end{equation}
such that
\begin{equation}
\mathrm{Im}  \mu_{*} \cong H_{*}(\tilde{X} ;k)^{G}  ,
\end{equation}
	where the right hand side is the $G$-invariant subspace of $H_{*}(X ;k)$.
\end{lem}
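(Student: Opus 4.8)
The plan is to build $\mu_*$ explicitly on singular chains and then read off both injectivity and the identification of its image from two elementary composition formulas. First I would define the transfer at the chain level. Since $p:\tilde X\to X$ is a finite Galois (regular) covering with group $G$, it has exactly $|G|$ sheets and $G$ acts freely on $\tilde X$ with quotient $X$. Given a singular simplex $\sigma:\Delta^q\to X$, the simplex $\Delta^q$ is simply connected, so $\sigma$ lifts to $\tilde X$, and its lifts are precisely the $|G|$ elements of a single free $G$-orbit $\{\,g\cdot\tilde\sigma:g\in G\,\}$. Set $\tau(\sigma)=\sum_{g\in G}g\cdot\tilde\sigma\in C_q(\tilde X;k)$, a sum independent of the choice of $\tilde\sigma$. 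A routine check that the lifts of a face $d_i\sigma$ are exactly the $G$-translates of $d_i\tilde\sigma$ shows $\tau$ commutes with the boundary, hence is a chain map $C_*(X;k)\to C_*(\tilde X;k)$; let $\mu_*:H_*(X;k)\to H_*(\tilde X;k)$ be the induced map.

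Next I would establish two identities. On one hand $p_\#\circ\tau=|G|\cdot\mathrm{id}$ on $C_*(X;k)$, since $p_\#$ sends each lift $g\cdot\tilde\sigma$ back to $\sigma$; passing to homology, $p_*\circ\mu_*=|G|\cdot\mathrm{id}_{H_*(X;k)}$. Because $\mathrm{char}\,k$ is coprime to $|G|$, the scalar $|G|$ is invertible in $k$, so $\tfrac{1}{|G|}p_*$ is a left inverse of $\mu_*$ and $\mu_*$ is injective. On the other hand, for a simplex $\tilde\sigma$ of $\tilde X$ the lifts of $p\circ\tilde\sigma$ are exactly the $G$-translates of $\tilde\sigma$, so $\tau\circ p_\#=\sum_{g\in G}g_\#$ on $C_*(\tilde X;k)$, whence $\mu_*\circ p_*=\sum_{g\in G}g_*=:N$ on $H_*(\tilde X;k)$.

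Finally I would pin down the image. The chain $\tau(\sigma)=\sum_{g}g\cdot\tilde\sigma$ is visibly $G$-fixed, so $\mathrm{Im}\,\mu_*\subseteq H_*(\tilde X;k)^G$. Conversely, if $\beta\in H_*(\tilde X;k)^G$ then $N(\beta)=\sum_{g}g_*\beta=|G|\,\beta$, and since $N=\mu_*\circ p_*$ this gives $\beta=\mu_*\bigl(\tfrac{1}{|G|}p_*\beta\bigr)\in\mathrm{Im}\,\mu_*$. Hence $\mathrm{Im}\,\mu_*=H_*(\tilde X;k)^G$, which is the asserted isomorphism, and together with injectivity it also exhibits $H_*(X;k)\cong H_*(\tilde X;k)^G$.

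I do not expect a genuine obstacle; the only points needing care are the verification that $\tau$ is a well-defined chain map and the bookkeeping that the lifts of a simplex form a single free $G$-orbit, both standard covering-space facts. A less hands-on alternative is the Cartan--Leray spectral sequence $H_p(G;H_q(\tilde X;k))\Rightarrow H_{p+q}(X;k)$ for the free action, which degenerates when $|G|$ is invertible in $k$ and yields $H_*(X;k)\cong H_*(\tilde X;k)_G\cong H_*(\tilde X;k)^G$; but the explicit chain-level transfer is what is used in the construction of the orbifold loop product downstream, so I would present that version.
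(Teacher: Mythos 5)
Your proof is correct and follows essentially the same route as the paper: the chain-level transfer $\sigma\mapsto\sum_{g}g\cdot\tilde\sigma$, the identities $p_*\circ\mu_*=|G|\cdot\mathrm{id}$ and $\mu_*\circ p_*=\sum_{g}g_*$, and invertibility of $|G|$ in $k$. Your write-up is in fact more careful than the paper's (which conflates $\mu_*$ with $\mu_*\circ p_*$ in its displayed formula), but there is no substantive difference in approach.
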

\begin{proof}
We first define the transfer map $\mu_{*}: H_{*}(X) \to H_{*}(\tilde{X})$. Let $\mu$ be a homomorphism between singular chain complexes $\mu : C_{*}(X) \to C_{*}(\tilde{X}) $, with
\begin{align*}
\mu (\Delta)  = \sum_{lifts \ of \Delta} \tilde{\Delta}.
\end{align*}
The summation runs over all lifts of the singular simplex $\Delta$. We can see that $\mu$ commutes with the differentials, hence we can define $\mu_{*} : H_{*}(X) \to H_{*}(\tilde{X})  $, with
\begin{equation}
\mu_{*}(x) = \sum_{g\in G} g_{*}x  .
\end{equation}
Then both $p_{*}\circ \mu_{*} : H_{*}(X) \to H_{*}(X)$ and $\mu_{*}\circ p_{*} : H_{*}(\tilde{X})^{G} \to H_{*}(\tilde{X})$ are the multiplication by $|G| \in k$. As the characteristic of $k$ is coprime to $|G|$, the multiplication by  $|G|$ is an isomorphisms of vector spaces, which implies $\mu_{*}$ and $p_{*}$ are also isomorphisms. Thus (\ref{eq1}) holds.
\end{proof}
By using this transfer map, we obtain a sequence of compositions
\begin{align*}
H_{p}(P_{G}M\times_{G}EG)\otimes H_{q}(P_{G}M\times_{G}EG) \xrightarrow{\mu_{*}\otimes \mu_{*}} \  H_{p}(P_{G}M)\otimes H_{q}(P_{G}M) \\
 \xrightarrow{\bullet} \  H_{p+q-n}(P_{G}M) \xrightarrow{p_{*}} \  H_{p+q-n}(P_{G}M\times_{G}EG)  ,
\end{align*}
where $p_{*}$ is the map induced from the covering map $ p : P_{G}M\times EG \longrightarrow P_{G}M\times_{G}EG $.

The obtained map 
\begin{equation}
\circ : H_{p}(P_{G}M\times_{G}EG)\otimes H_{q}(P_{G}M\times_{G}EG) \longrightarrow H_{p+q-n}(P_{G}M\times_{G}EG), 
\end{equation}
is the desired orbifold loop product, which coincides with the ordinary loop product when $G$ is a trivial group by the above construction.
\end{proof}
\begin{rem}
It is shown in \cite{LUX} that the orbifold loop product $\circ$ is indeed an orbifold invariant.
\end{rem}
\section{Propositions for the proof of Theorem}
In this section, we prove some propositions that we use for the proof of main theorem. Unless otherwise stated, we denote by $M$ a smooth colosed oriented manifold, by $\mathcal{G}$ a path connected group acting continuously on $M$, and by $G$ a finite subgroup of $\mathcal{G}$ acting smoothly on $M$. 

The following proposition is proved in \cite{LUX} for computing loop product of some lens spaces. We prove it below because we use it in this paper.
\begin{prp}[\cite{LUX}] \label{prp4}
	Let $\mathcal{G}$ be a path connected group acting continuously on $M$, and $G$ a finite subgroup of $\mathcal{G}$ acting smoothly on $M$. Then for each $g \in G$, there exists a homotopy equivalence
	\[
	P_{g}X \simeq LM  .
	\]
\end{prp}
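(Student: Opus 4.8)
The plan is to exploit the path-connectedness of $\mathcal{G}$ to ``untwist'' a $g$-twisted loop by translating it along a path from the identity to $g$. First I would fix once and for all a continuous path $\alpha\colon[0,1]\to\mathcal{G}$ with $\alpha(0)=e$ and $\alpha(1)=g$, which exists precisely because $\mathcal{G}$ is path connected. Using the continuous action of $\mathcal{G}$ on $M$ together with continuity of inversion in $\mathcal{G}$, I would define
\[
F_{\alpha}\colon P_{g}M\longrightarrow LM,\qquad F_{\alpha}(\sigma)(t)=\sigma(t)\cdot\alpha(t)^{-1}.
\]
The whole point of the twisting condition $\sigma(1)=\sigma(0)g$ is that $F_{\alpha}(\sigma)$ then closes up: $F_{\alpha}(\sigma)(0)=\sigma(0)\cdot e=\sigma(0)$ and $F_{\alpha}(\sigma)(1)=\sigma(0)g\cdot g^{-1}=\sigma(0)$, so $F_{\alpha}(\sigma)$ is a genuine free loop.

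Next I would exhibit the inverse. Define
\[
G_{\alpha}\colon LM\longrightarrow P_{g}M,\qquad G_{\alpha}(l)(t)=l(t)\cdot\alpha(t),
\]
and observe that $G_{\alpha}(l)(1)=l(0)\cdot g=G_{\alpha}(l)(0)\cdot g$, so $G_{\alpha}(l)\in P_{g}M$. From the group identities $\alpha(t)\alpha(t)^{-1}=\alpha(t)^{-1}\alpha(t)=e$ and the action axioms one gets immediately $F_{\alpha}\circ G_{\alpha}=\mathrm{id}_{LM}$ and $G_{\alpha}\circ F_{\alpha}=\mathrm{id}_{P_{g}M}$. Continuity of both maps for the compact-open topologies follows from continuity of the action $M\times\mathcal{G}\to M$ and the standard exponential-law behaviour of mapping spaces out of the compact interval (postcomposition and pairing with a fixed map are continuous). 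Hence $F_{\alpha}$ is in fact a homeomorphism, which is stronger than the asserted homotopy equivalence.

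The only genuine technical point — and presumably the reason the statement is phrased only up to homotopy equivalence — is compatibility with the smooth structure used to define $LM$: the excerpt takes $LM$ to consist of piecewise smooth loops, and $\sigma(t)\cdot\alpha(t)^{-1}$ need not be piecewise smooth when $\alpha$ is merely continuous. I would dispose of this either by choosing $\alpha$ to be smooth, so that $F_{\alpha}$ and $G_{\alpha}$ preserve the piecewise-smooth subspaces (this is available when $\mathcal{G}$ is a Lie group, as in all the intended applications), or, in the general topological-group case, by first producing the homeomorphism on the spaces of all continuous (twisted) loops and then invoking the fact that the inclusion of piecewise-smooth loops into all loops is a homotopy equivalence. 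Beyond this bookkeeping there is no real obstacle; I would also note that two choices of $\alpha$ differ by a loop in $\mathcal{G}$, which is exactly the ambiguity that $\pi_{1}\mathcal{G}$ and the action $\Phi_{*}$ control in the later upgrade to a $G$-equivariant statement.
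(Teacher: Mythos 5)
Your argument is correct, and it is genuinely different from the paper's. The paper fixes a path $\theta_{g}$ from the unit to $g$ and closes up a twisted loop by \emph{concatenation}: $\tau_{g}(\sigma)$ runs through $\sigma$ at double speed and then appends $s\mapsto\sigma(1)\check{\theta}_{g}(s)$, with $\eta_{g}$ doing the reverse; these are only homotopy inverses (one must contract the appended arc and undo the reparametrization), whereas your pointwise gauge transformation $F_{\alpha}(\sigma)(t)=\sigma(t)\cdot\alpha(t)^{-1}$ is a strict homeomorphism with strict inverse $G_{\alpha}$, which is a stronger and cleaner conclusion for this proposition; your endpoint computations using $\sigma(1)=\sigma(0)g$ are exactly right for the right action the paper uses, and your remarks on continuity and on the piecewise-smooth versus continuous model are the correct bookkeeping. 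The trade-off is downstream: the paper's specific maps $\tau_{g},\eta_{g}$ are reused verbatim in Proposition \ref{prp}, Lemma \ref{l} and Proposition \ref{prp5}, where their concatenation form interacts directly with the concatenation products $\gamma,\gamma'$ defining the loop product, so the later commutative diagrams are stated for those maps; if you adopted $F_{\alpha}$ instead you would have to redo those compatibility checks (they do go through, since $F_{\alpha}$ and $\tau_{g}$ are homotopic and the discrepancy between two choices of $\alpha$, or between the two sides of the equivariance square, is again the action $\Phi_{*}([a],-)$ of a loop $a$ in $\mathcal{G}$, exactly as you observe at the end). So your proof buys a sharper statement and a transparent account of the dependence on $\pi_{1}\mathcal{G}$, while the paper's buys direct compatibility with the loop-product constructions that follow.
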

\begin{proof}
By the assumption on $G$, there exists a path $\theta_{g}$ in $\mathcal{G}$ which starts at the unit in $G$ and ends at $g$ for each $g \in G$. We fix such paths $\{\theta_{g}\}_{g\in G}$.

We consider the maps $\tau_{g} : P_{g}M \longrightarrow LM $, and $ \eta_{g} : LM \longrightarrow P_{g}M $, with for each $g\in G$, 
	\begin{equation}
	\tau_{g} (t) =  
	\begin{cases}
	\sigma(2t) & (0 \leq t \leq 1/2) ,\\
	\sigma (1)\check{\theta_{g}}(2t-1) & (1/2 \leq t \leq 1) ,
	\end{cases}
	\end{equation}
	and
	\begin{equation}
	\eta_{g}(l) =  
	\begin{cases}
	l(2t) & (0 \leq t \leq 1/2) ,\\
	l(1)\theta_{g}(2t-1) & (1/2 \leq t \leq 1) ,
	\end{cases}
	\end{equation}
	where $\check{\theta_{g}}$ denotes the image of $\theta_{g}$ by the map $\mathcal{G} \to \mathcal{G}$ wihch sends an element to its inverse. Then we can see $\tau_{g} \circ \eta_{g} \simeq id_{LM}$ and $\eta_{g} \circ \tau_{g} \simeq id_{P_{g}M}$, hence $P_{g}M \simeq LM$ for each $g\in G$.
\end{proof}
We consider the map $\Phi : \Omega\mathcal{G} \times LM \longrightarrow LM $ with
\begin{equation}\label{nat1}
\Phi (a,l) = (t \mapsto a(t)\cdot l(t)),
\end{equation}
and the induced map $\Phi_{*} : H_{*}(\Omega \mathcal{G})\otimes H_{*}(LM) \longrightarrow H_{*}(LM)$, which defines an action of the Pontrjagin ring $H_{*}(\Omega \mathcal{G})$ on $H_{*}(LM)$. If the action $\Phi_{*}$ of $H_{0}(\Omega \mathcal{G})$ on $H_{*}(LM)$satisfies $\Phi_{*}(x) = x$, for any $x \in H_{*}(LM)$, we call the action $\Phi_{*}$ \emph{trivial}.
\begin{prp}\label{prp}
Assume that the action $\Phi_{*}$ is trivial. Then the homotopy equivalence of Proposition \ref{prp4}
	\begin{equation}
	\displaystyle \coprod _{g \in G} P_{g}M \simeq \displaystyle \coprod _{g \in G}LM
	\end{equation}
	is $G$-equivariant at homology level with coefficients in $k$.
\end{prp}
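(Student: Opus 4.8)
The plan is to track the explicit homotopy equivalences $\tau_g \colon P_gM \to LM$ and $\eta_g \colon LM \to P_gM$ from Proposition \ref{prp4} and check how they interact with the $G$-actions on the two sides. On the source $\coprod_{g\in G}P_gM$, the group $G$ acts by $\sigma \cdot h = \sigma h \in P_{h^{-1}gh}M$ (Definition \ref{defn}); on the target $\coprod_{g\in G}LM$ I must first say what the transported action is, and then show that it agrees, at the level of $H_*(-;k)$, with a more convenient description. The natural thing is to conjugate the $G$-action through $\{\tau_g, \eta_g\}_{g\in G}$; the composite $\tau_{h^{-1}gh}\circ(\text{action of }h)\circ\eta_g \colon LM \to LM$ is then an explicit self-map of $LM$ that I need to understand.

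First I would unwind this composite. Given $l\in LM$, $\eta_g(l)$ is the loop $l$ followed by the path $\theta_g$ (suitably reparametrized), giving an element of $P_gM$; acting by $h$ right-multiplies the whole path pointwise by $h$, landing in $P_{h^{-1}gh}M$; then $\tau_{h^{-1}gh}$ reparametrizes and appends $\check\theta_{h^{-1}gh}$. Concatenating paths and cancelling reparametrizations, the net effect is that the resulting loop in $LM$ is, up to homotopy, $l$ pointwise right-translated by $h$ and then corrected by the loop $\theta_g\cdot h \cdot \check\theta_{h^{-1}gh}$ (a loop in $\mathcal{G}$ based at the identity, since it starts and ends at $e$). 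In other words, this composite is homotopic to $\Phi(a_{g,h}, h\cdot l)$ for a suitable based loop $a_{g,h}\in\Omega\mathcal{G}$ determined by the chosen paths, where $h\cdot l$ denotes the pointwise right-translate. The key algebraic input here is that right-translation by $h$ is itself realized by restricting $\Phi$ along the constant-path-based loop — no, more carefully: since $h\in G\subset\mathcal{G}$ and $\mathcal{G}$ is path connected, translation by $h$ on $LM$ is homotopic to the identity as a map (via the path $\theta_h$), so up to homotopy the whole composite is $\Phi(b_{g,h}, l)$ for some based loop $b_{g,h}$, i.e. it is given by the action of the class $[b_{g,h}]\in H_0(\Omega\mathcal{G})$...

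Here is where the triviality hypothesis enters and where the main work lies. The map $\Phi(b_{g,h}, -)\colon LM\to LM$ need not be homotopic to the identity as a space-level map, but the hypothesis that $\Phi_*$ is trivial on $H_0(\Omega\mathcal{G};k)$ says precisely that the induced endomorphism of $H_*(LM;k)$ is the identity — this is exactly the sense in which we only get $G$-equivariance "at homology level." So the step I must nail down is: the conjugated $G$-action on $H_*\bigl(\coprod_g LM;k\bigr)$ sends the $g$-th summand to the $(h^{-1}gh)$-th summand by the permutation isomorphism composed with $\Phi_*([b_{g,h}])$, and by triviality the latter is the identity. The main obstacle is bookkeeping: verifying that $[b_{g,h}]$ lies in $H_0(\Omega\mathcal{G})$ rather than some higher-degree class (i.e. that the only ambiguity is the path component of the correcting loop, controlled by $\pi_0\Omega\mathcal{G}=\pi_1\mathcal{G}$), and checking the cocycle-type compatibilities so that these summand-wise identifications assemble into a single $G$-equivariant (at homology level) isomorphism. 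I would organize the reparametrization homotopies into one lemma, reducing everything to the statement that for $b\in\Omega\mathcal{G}$ the map $\Phi(b,-)_*$ is the identity on $H_*(LM;k)$, which is the hypothesis; then equivariance of $\coprod_g\tau_g$ on homology follows by comparing the two $G$-actions summand by summand.
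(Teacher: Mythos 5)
Your proposal is correct and follows essentially the same route as the paper: reduce to the single composite $\tau_{h^{-1}gh}\circ h\circ\eta_{g}\colon LM\to LM$, identify it (using that translation by $h$ is homotopic to the identity, which is Lemma \ref{prp2}) with $\Phi(a,-)$ for an explicit based loop $a$ in $\mathcal{G}$ built from $\theta_{g}$, $h$, and $\check{\theta}_{h^{-1}gh}$, and then invoke the triviality of $\Phi_{*}$ on $H_{0}(\Omega\mathcal{G};k)$. The only difference is that the paper writes the correcting loop down concretely as $h^{-1}\theta_{g}h*\check{\theta}_{h^{-1}gh}$, which settles the bookkeeping you flag as the remaining work.
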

We use the following lemma for the proof.
\begin{lem} \label{prp2}
Let $\mathcal{G}$ be a path connected group acting continuously on $M$, and $G$ a finite subgroup of $\mathcal{G}$ acting smoothly on $M$. Then $G$ acts on $H_{*}(LM)$ trivially.
\end{lem}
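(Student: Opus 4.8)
The plan is to show directly that for each $g\in G$ the self-map of $LM$ induced by the action of $g$ is homotopic to the identity; this immediately forces the induced action on $H_{*}(LM)$ to be the identity, hence the $G$-action on $H_{*}(LM)$ to be trivial.

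First I would invoke path-connectedness of $\mathcal{G}$ to choose, for every $g\in G$, a path $\theta_{g}\colon[0,1]\to\mathcal{G}$ with $\theta_{g}(0)=e$ the unit and $\theta_{g}(1)=g$ (the same choice of paths as in the proof of Proposition \ref{prp4}). Recall that the relevant $G$-action on $LM=\Map(S^{1},M)$ is the one induced pointwise from the action of $G$ on $M$, so that $g$ sends a loop $l$ to the loop $t\mapsto g\cdot l(t)$. Then I would define
\[
\Theta_{g}\colon[0,1]\times LM\longrightarrow LM,\qquad \Theta_{g}(s,l)(t)=\theta_{g}(s)\cdot l(t).
\]
This is continuous because the action map $\mathcal{G}\times M\to M$ is continuous and $S^{1}$ is locally compact Hausdorff, so by the exponential law continuous maps $[0,1]\times LM\to LM$ correspond to continuous maps $([0,1]\times LM)\times S^{1}\to M$, and $\Theta_{g}$ visibly arises from such a map. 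By construction $\Theta_{g}(0,-)=\mathrm{id}_{LM}$, while $\Theta_{g}(1,-)$ is exactly the self-map of $LM$ given by the action of $g$. Hence the action of $g$ on $LM$ is homotopic to the identity, so it induces the identity on $H_{*}(LM)$ with arbitrary coefficients. Since $g\in G$ was arbitrary, $G$ acts trivially on $H_{*}(LM)$.

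There is no real obstacle in this argument: the only point that deserves a line of care is checking that $\Theta_{g}$ is genuinely continuous as a map into the mapping space $LM$ with the compact–open topology, which is where local compactness of $S^{1}$ enters. Conceptually the statement simply records that it is the connectedness of the ambient group $\mathcal{G}$, not any feature of the finite subgroup $G$, that trivializes the action — the same mechanism underlying Proposition \ref{prp4}.
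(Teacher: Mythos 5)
Your proof is correct and follows essentially the same route as the paper: both use the path-connectedness of $\mathcal{G}$ to choose paths $\theta_{g}$ from the unit to $g$ and thereby exhibit a homotopy between the identity of $LM$ and the self-map induced by $g$, which forces the induced map on $H_{*}(LM)$ to be the identity. Your write-up merely spells out the continuity of the homotopy (via the exponential law) that the paper leaves implicit.
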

\begin{proof}
	The paths $\{\theta_{g}\}_{g\in G} $ in the proof of Proposition \ref{prp4} make a homotopy between the actions of $e$ and $g$ on $H_{*}(LM)$. 
	Because $H_{*}(LM)$ is discrete, they act trivially.
\end{proof}
\begin{proof}[Proof of Proposition \ref{prp}]	
	We should show the commutativity of the following diagram for each $g, h \in G$
	\begin{equation*}
	\xymatrix{
	H_{*}(LM) \ar[r]^{h_{*}} &  H_{*}(LM) \\
	H_{*}(P_{g}M) \ar[r]^{h_{*}} \ar[u]^{\tau_{g*}} & H_{*}(P_{h^{-1}gh}M) \ar[u]^{\tau_{h^{-1}gh*}}  .
	}
	\end{equation*}
	By Lemma \ref{prp2}, the upper $h_{*}$ is equal to an identity.
	Thus we should show $(\tau_{h^{-1}gh} \circ h \circ \eta_{g})_{*} = id$. 
	Let $a$ be the loop $h^{-1}\theta_{g}h * \check{\theta}_{h^{-1}gh}$ in $\mathcal{G}$ (here $*$ denotes concatenating operation defined by (\ref{con1}). See Figure \ref{Figure1}). Then the map $\tau_{h^{-1}gh} \circ h \circ \eta_{g}$ is equal to the map $\Phi (a, -) : LM \longrightarrow LM $, and the induced homomorphism $(\tau_{h^{-1}gh} \circ h \circ \eta_{g})_{*}$ is equal to the action $\Phi_{*}([a], -) : H_{*}(LM) \longrightarrow H_{*}(LM)$, which is trivial by the assumption.
\end{proof}
\begin{figure}
        \centering
        \def \svgwidth{60mm}
        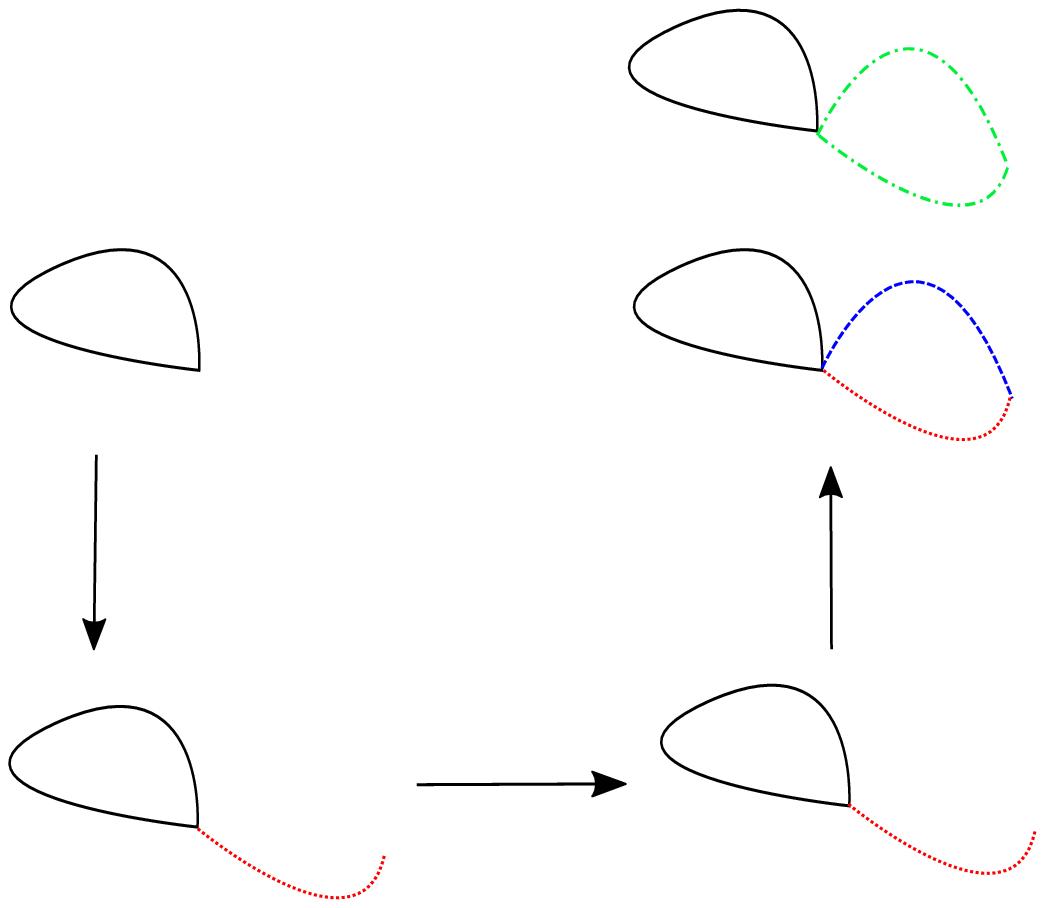
	\caption{for Proposition \ref{prp}}
        \label{Figure1}
\end{figure}
\section{Main theorem}
Let $\tau$ be a direct sum of $\tau_{g}$'s over $g \in G$, namely  $\tau = \oplus_{g} \tau_{g} : \displaystyle \coprod _{g \in G} P_{g}M \to \displaystyle \coprod _{g \in G}LM$. Then by Proposition \ref{prp} and  Lemma \ref{prp2}, $\tau$ induces an isomorphism
\begin{eqnarray}
		H_{*}(L[M/G]) &= & H_{*}\left(\displaystyle \coprod _{g \in G} P_{g}M \right)^{G} \\
		& \overset{\tau _{*}}{\cong} & H_{*}\left(\displaystyle \coprod_{g \in G} LM \right)^{G} \\
		&\cong & H_{*}(LM \times G)^{G} \\
		&\cong & (H_{*}(LM) \otimes H_{*}(G))^{G} \\
		&\cong & H_{*}(LM) \otimes H_{*}(G)^{G} \\
		&\cong & H_{*}(LM) \otimes Z(k[G]).
	\end{eqnarray}
We show that $\tau_{*}$ preserves loop products. Our main theorem is the following.
	\begin{thm}\label{thm}
	Let $\mathcal{G}$ be a path connected group acting continuously on $M$, and $G$ be a finite subgroup of $\mathcal{G}$ acting smoothly on $M$, and $k$ be a field whose characteristic is coprime to $|G|$.
	If the action $\Phi_{*}$ is trivial with coefficient in $k$, then there exists an isomorphism as $k$-algebras
	\begin{equation}\label{iso}
	\mathbb{H}_{*}(L[M/G];k) \cong \mathbb{H}_{*}(LM;k) \otimes Z(k[G])  ,
	\end{equation}
	where $Z(k[G])$ denotes the center of the group ring $k[G]$.
	\end{thm}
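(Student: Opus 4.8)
The plan is to upgrade the additive isomorphism $\tau_*$ already constructed from the chain of equalities preceding the theorem into a ring isomorphism, and then to identify the transported product on $\mathbb{H}_*(LM) \otimes Z(k[G])$ with the tensor product of the Chas--Sullivan product on $\mathbb{H}_*(LM)$ and the ordinary product of $Z(k[G])$. First I would recall that, by Lemma~\ref{prp1} (the transfer), the orbifold loop product $\circ$ on $H_*(P_G M \times_G EG;k)$ is computed entirely inside $H_*(P_G M;k)^G$ via the product $\bullet$: the composite $p_* \circ \bullet \circ (\mu_* \otimes \mu_*)$ restricts, on the $G$-invariants, to $\bullet$ up to the invertible scalar $|G|$, so it suffices to understand $\bullet$ on $\big(\bigoplus_{g} H_*(P_g M)\big)^G$ and check that $\tau_*$ intertwines $\bullet$ with a product on $\bigoplus_g H_*(LM) = H_*(LM) \otimes k[G]$.

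The core computation is the ``per-component'' one: for fixed $g,h \in G$, I would show that the square
\[
\xymatrix{
H_*(P_g M) \otimes H_*(P_h M) \ar[r]^-{\bullet} \ar[d]_{\tau_{g*} \otimes \tau_{h*}} & H_{*-n}(P_{gh} M) \ar[d]^{\tau_{gh*}} \\
H_*(LM) \otimes H_*(LM) \ar[r]^-{\circ} & H_{*-n}(LM)
}
\]
commutes, where the bottom $\circ$ is the Chas--Sullivan loop product. Both $\bullet$ and $\circ$ are built from the same pattern --- pull back along the diagonal $\iota : M \to M \times M$ using $ev_{1/2}$ (resp.\ $ev_1 \times ev_0$ and $ev \times ev$), apply the Cohen--Klein--Jones umkehr map and Thom isomorphism, then concatenate. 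So the claim reduces to a compatibility of the relevant pullback diagrams and concatenation maps under the reparametrization maps $\tau_{g}, \eta_{g}$ of Proposition~\ref{prp4}. Concretely, $\tau_g$ inserts the fixed reverse path $\check\theta_g$ on the second half of the loop; I would track how a concatenated element $\gamma(\sigma_g, \sigma_h) \in P_{gh}M$ maps under $\tau_{gh}$ versus how $\gamma(\tau_g \sigma_g, \tau_h \sigma_h)$ sits in $LM$, and exhibit an explicit homotopy between the two (a standard reparametrization homotopy absorbing the inserted paths $\theta_g, \theta_h, \check\theta_{gh}$, which are all nullhomotopic ``up to sliding'' since we only need equality at homology level and, crucially, the action $\Phi_*$ is trivial so the inserted loops in $\mathcal{G}$ act trivially on $H_*(LM)$). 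The umkehr-map side is handled because $\tau_g$ is a homotopy equivalence compatible with the evaluation maps up to the homotopies already fixed, so it commutes with the Pontryagin--Thom construction and the Thom isomorphism. Summing over $g,h$, $\tau_* : \big(\bigoplus_g H_*(P_gM)\big)^G \to \big(\bigoplus_g H_*(LM)\big)^G$ intertwines $\bullet$ with the product on $H_*(LM) \otimes k[G]$ given by $(x \otimes g)(y \otimes h) = (x \circ y) \otimes gh$, i.e.\ the tensor product of the Chas--Sullivan ring and the group ring $k[G]$.

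To finish, I would combine this with the identification of $G$-invariants: taking $G$-fixed points of $H_*(LM) \otimes k[G]$ (where $G$ acts trivially on the first factor by Lemma~\ref{prp2} and by conjugation on $k[G]$) yields $H_*(LM) \otimes k[G]^G = H_*(LM) \otimes Z(k[G])$ as a ring, and the transfer correction factor $|G|$ is invertible in $k$ by the coprimality hypothesis and can be absorbed by rescaling (or, better, noting that $p_* \mu_* = |G| \cdot \mathrm{id}$ on invariants so the product $p_* \circ \bullet \circ (\mu_* \otimes \mu_*)$ already equals $\bullet$ after dividing by $|G|$ exactly once, matching the normalization). Shifting degrees by $\dim M$ throughout turns the graded pieces $H_{*+\dim M}$ into $\mathbb{H}_*$ and the $-n = -\dim M$ degree drop of $\bullet$ into the degree-preserving product on $\mathbb{H}_*$, giving the stated $k$-algebra isomorphism \eqref{iso}.

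The main obstacle I anticipate is the per-component compatibility square: making precise, at the chain or homotopy level, that the Cohen--Klein--Jones umkehr maps for $P_gM \times_M P_hM \hookrightarrow P_gM \times P_hM$ and for $LM \times_M LM \hookrightarrow LM \times LM$ correspond under $\tau_g \times \tau_h$, since these umkehr maps are defined via embeddings of infinite-dimensional manifolds / Thom spectra and one must check the relevant normal bundle data and evaluation maps are carried along coherently --- together with verifying that the concatenation maps match only after an explicit homotopy that uses the triviality of $\Phi_*$ to kill the contribution of the inserted paths in $\mathcal{G}$. This is where all the hypotheses (orientability and closedness of $M$ for Pontryagin--Thom, path-connectedness of $\mathcal{G}$ for the existence of the $\theta_g$, triviality of $\Phi_*$ for the homotopy, and $\mathrm{char}\,k \nmid |G|$ for the transfer) are genuinely used.
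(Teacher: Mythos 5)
Your proposal follows essentially the same route as the paper: the paper likewise reduces everything to the per-component square $\tau_{gh*}\circ\bullet = \circ\circ(\tau_{g*}\otimes\tau_{h*})$ (its Proposition \ref{prp5}), proved via naturality of the Pontrjagin--Thom and Thom maps applied to the pullback diagram (\ref{pb}) together with a concatenation-compatibility lemma whose proof inserts the loop $h\check{\theta}_{h}*\check{\theta}_{g}*g^{-1}\theta_{gh}$ in $\mathcal{G}$ and kills it using the triviality of $\Phi_{*}$, and then passes to $G$-invariants via the transfer to obtain $\mathbb{H}_{*}(LM)\otimes Z(k[G])$. Your attention to the $|G|$ normalization of the transfer is a welcome extra detail, but the overall strategy is the same.
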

	For the proof, we should show that the diagram
\begin{equation} \label{diagram1}
	\xymatrix{
	H_{p}\left( \displaystyle \coprod_{g} LM \right)^{G} \otimes H_{q}\left(\displaystyle \coprod_{g} LM\right)^{G} \ar[r]^(0.58){\circ} &  H_{p+q-n}\left(\displaystyle \coprod_{g} LM\right)^{G} \\
	H_{p}\left(\displaystyle \coprod_{g}P_{g}M\right)^{G} \otimes H_{q}\left(\displaystyle \coprod_{g} P_{g}M\right)^{G} \ar[r]^(0.58){\circ} \ar[u]^{\tau_{*}\otimes \tau_{*}} & H_{p+q-n}\left(\displaystyle \coprod_{g} P_{g}M\right)^{G} \ar[u]^{\tau_{*}}  ,
	}
	\end{equation}
is commutative. To prove this we need some lemmas. We consider the following commutative diagram of pullbacks
	\begin{equation}\label{pb}
	\xymatrix{
	LM\times_{M} LM \ar@/^60pt/[dddd]^(0.2){ev_{1/2}} \ar[rr] &  & LM\times LM \ar@/^50pt/[dddd]^(0.2){ev_{1/2}\times ev_{0}} \\
	  &  &  \\
	P_{g}M\times_{M} P_{h}M \ar[dd]^{ev_{1/2}} \ar@{-->}[uu]^{\tau_{1/2}}  \ar[rr]|\hole &  & P_{g}M\times P_{h}M \ar[uu]^{\tau_{g}\times \tau_{h}} \ar[dd]^{ev_{1}\times ev_{0}} \\
	  &  &  \\
	M \ar[rr]^{\Delta} &  & M\times M  ,
	}
	\end{equation}
where $\tau_{1/2}$ is a map induced from the universality of the pullback.
\begin{lem}\label{l}
\begin{equation}
	\xymatrix{
	LM  &  &  LM\times_{M} LM \ar[ll]^{\gamma^{\prime}} \\
	  &  &  \\
	P_{gh}M \ar[uu]^{\tau_{gh}} &  &  P_{g}M\times_{M}P_{h}M \ar[ll]^{\gamma} \ar@{-->}[uu]^{\tau_{1/2}}  ,
	}
	\end{equation}
	is commutative at homology level, where $\gamma^{\prime}$ and $\gamma$ are concatenating maps defined by
	\begin{equation}
	\gamma^{\prime}((l_{1},l_{2}))(t) =  
	\begin{cases}
	l_{1}(2t) & (0 \leq t \leq 1/4) ,\\
	l_{2}(2t-1/2) & (1/4 \leq t \leq 3/4) ,\\
	l_{1}(2t-1) & (3/4 \leq t \leq 1) ,
	\end{cases}
	\end{equation}
	and
	\begin{equation}
	\gamma ((\sigma_{1},\sigma_{2}))(t) =
	\begin{cases}
	\sigma_{1}(2t) & (0 \leq t \leq 1/2) ,\\
	\sigma_{2}(2t-1) & (1/2 \leq t \leq 1)	  .
	\end{cases}	
	\end{equation}
\end{lem}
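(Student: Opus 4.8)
The plan is to compute the two composites around the square explicitly as maps $P_gM\times_M P_hM\to LM$, to observe that they differ only through the choice of an auxiliary path in $\mathcal{G}$, and then to deduce equality on homology from the standing hypothesis of Theorem~\ref{thm} that the action $\Phi_*$ is trivial over $k$ (which is in force throughout Section~4).

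First I would unwind the definitions. For $(\sigma_1,\sigma_2)\in P_gM\times_M P_hM$, so that $\sigma_1(1)=\sigma_2(0)$, the loop $\tau_{gh}(\gamma(\sigma_1,\sigma_2))$ traverses $\sigma_1$ on $[0,1/4]$, then $\sigma_2$ on $[1/4,1/2]$, and then the translated path $\sigma_2(1)\cdot\check\theta_{gh}$ on $[1/2,1]$. On the other hand $\gamma'(\tau_{1/2}(\sigma_1,\sigma_2))=\gamma'(\tau_g\sigma_1,\tau_h\sigma_2)$ traverses $\sigma_1$ on $[0,1/4]$, then $\sigma_2$ on $[1/4,1/2]$, then $\sigma_2(1)\cdot\check\theta_h$ on $[1/2,3/4]$, and finally $\sigma_1(1)\cdot\check\theta_g=\sigma_2(1)\cdot(h^{-1}\check\theta_g)$ on $[3/4,1]$, using $\sigma_1(1)=\sigma_2(0)=\sigma_2(1)h^{-1}$. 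Hence the two composites agree on $[0,1/2]$, and on $[1/2,1]$ both have the form $t\mapsto\sigma_2(1)\cdot\xi(\text{rescaled})$ for a path $\xi$ in $\mathcal{G}$ from the unit to $(gh)^{-1}$: for the first composite $\xi=\check\theta_{gh}$, and for the second $\xi$ is the concatenation $\check\theta_h*(h^{-1}\check\theta_g)$.

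Since $\check\theta_{gh}$ and $\check\theta_h*(h^{-1}\check\theta_g)$ have the same endpoints, the second is homotopic rel endpoints to $\check\theta_{gh}$ followed by the loop $b$ at $(gh)^{-1}$ obtained by running $\check\theta_{gh}$ backwards and then $\check\theta_h*(h^{-1}\check\theta_g)$ forwards. Applying this homotopy inside the formula for the second composite, using $\sigma_2(1)\cdot(gh)^{-1}=\sigma_1(0)$ to rewrite the $b$-part as a loop $s\mapsto\sigma_1(0)\cdot d(s)$ with $d:=(gh)\cdot b$ a loop in $\mathcal{G}$ based at the unit, and then reparametrizing the circle, one identifies the second composite, up to homotopy of maps $P_gM\times_M P_hM\to LM$, with the first composite into which the small loop $s\mapsto\sigma_1(0)\cdot d(s)$ has been spliced at the basepoint. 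Contracting the splicing subinterval — equivalently, shrinking the support of $d$ within $\Omega\mathcal{G}$ — this splicing operation is precisely the map $\Phi(d,-):LM\to LM$, so that $\gamma'\circ\tau_{1/2}\simeq\Phi(d,-)\circ(\tau_{gh}\circ\gamma)$. Passing to homology with coefficients in $k$ and invoking triviality of $\Phi_*$, so that $\Phi(d,-)_*=\mathrm{id}$ on $H_*(LM;k)$, yields $(\gamma'\circ\tau_{1/2})_*=(\tau_{gh}\circ\gamma)_*$, which is the asserted commutativity at homology level.

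The main obstacle, I expect, is the bookkeeping in the previous paragraph: absorbing the three translated tails $\check\theta_g,\check\theta_h,\check\theta_{gh}$ into the single based loop $d$, checking that ``splice a basepoint loop on a subinterval'' is homotopic to the global action $\Phi(d,-)$, and keeping the left/right action conventions of $\tau_g$ and of $\Phi$ consistent throughout. Once the discrepancy is pinned down as a $\Phi$-action the conclusion is immediate; conversely some such hypothesis is genuinely needed, since already for $\mathcal{G}=\mathrm{SO}(3)$ and $G=\mathbb{Z}/2$ the paths $\check\theta_h*(h^{-1}\check\theta_g)$ and $\check\theta_{gh}$ need not be homotopic rel endpoints, so the discrepancy loop $d$ can represent the nontrivial class of $\pi_1(\mathcal{G})$.
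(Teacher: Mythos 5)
Your proof is correct and follows essentially the same route as the paper's: the paper likewise reduces the square to the observation that the two composites differ by a discrepancy loop $b=h\check{\theta}_{h}*\check{\theta}_{g}*g^{-1}\theta_{gh}$ in $\mathcal{G}$ (packaged there as the concatenation operator $*b$ on $P_{gh}M$ via the homotopy inverse $\eta_{gh}$ of $\tau_{gh}$), which acts trivially on homology by the same $\Phi_{*}$-triviality argument as in Proposition \ref{prp}. The only cosmetic difference is that you express the discrepancy directly as $\Phi(d,-)$ on $LM$ rather than as $*b$ on $P_{gh}M$.
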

\begin{proof}
	Let $b$ be the loop $h\check{\theta}_{h}*\check{\theta}_{g}*g^{-1}\theta_{gh}$ in $\mathcal{G}$ (See Figure \ref{Figure2}). Then we have the commutative diagram
	\begin{equation}
	\xymatrix{
	LM \ar[dd]^{\eta_{gh}} &  &  LM\times_{M} LM \ar[ll]^{\gamma '} \\
	  &  &  \\
	P_{gh}M &  &  P_{g}M\times_{M}P_{h}M \ar@{-->}[uu]^{\tau_{1/2}} \ar[ddll]^{\gamma} \\
	& & \\
	P_{gh}M \ar[uu]^{* b} & & ,
	}
	\end{equation}
	where $* b$ denotes the map $\sigma \mapsto \sigma * \sigma(1)b$. Hence it reduces to show that $(* b)_{*} : H_{*}(P_{gh}M) \to H_{*}(P_{gh}M)$ is equal to an identity, which follows from the similar argument in the proof of Proposition \ref{prp}.
	\end{proof}
\begin{figure}
        \centering
        \def \svgwidth{60mm}
        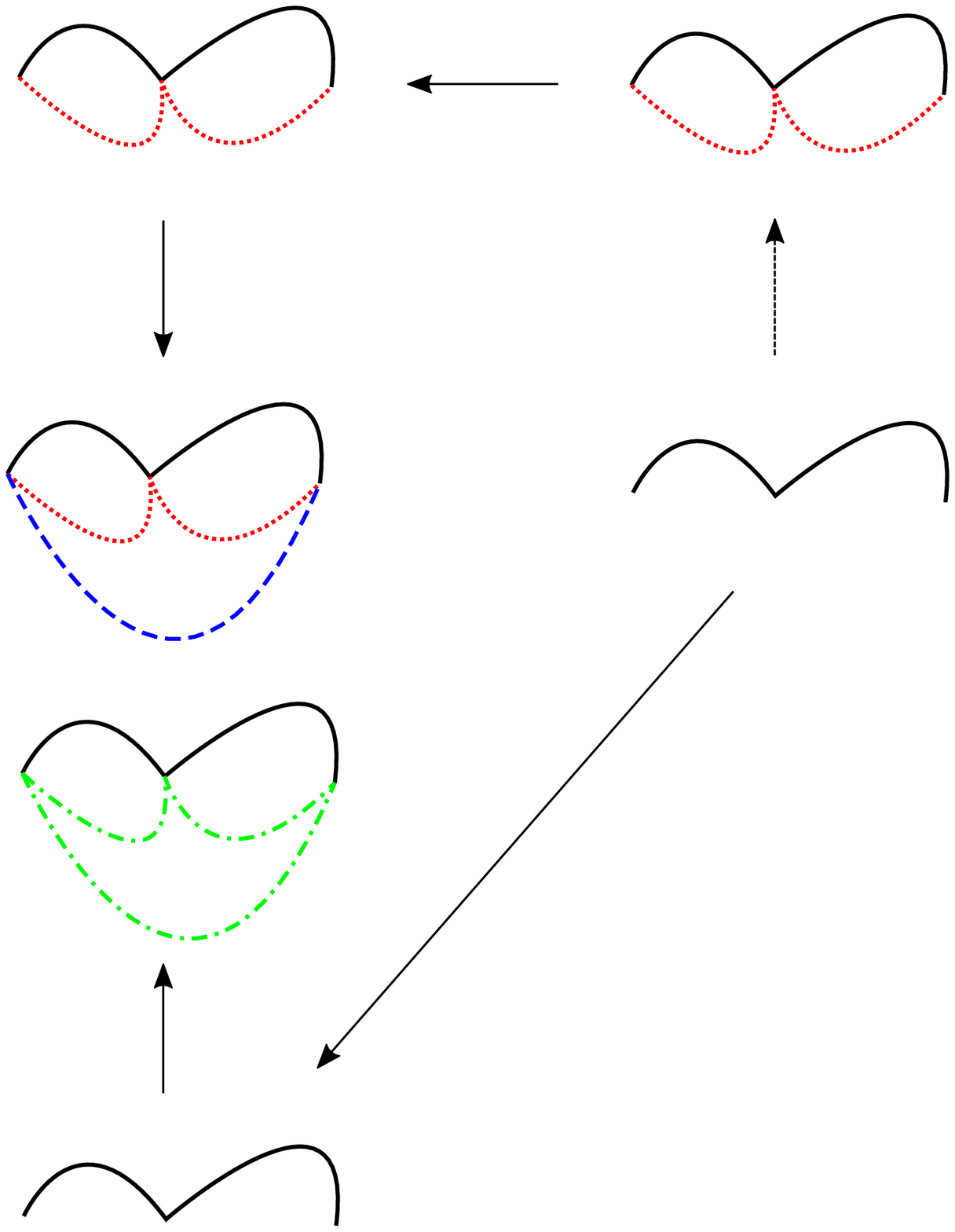
	\caption{for Lemma \ref{l}}
        \label{Figure2}
\end{figure}
\begin{lem}\label{lem5}
The concatenating map $\gamma^{\prime}$ defines the same product as the loop product $\circ$. namely the diagram
	\begin{equation}
	\xymatrix{
	H_{*}(LM\times LM) \ar[r]^(0.4){P.T.} & H_{*}((LM\times_{M} LM)^{TM}) \ar[r]^{Thom.} & H_{*\dim M}(LM\times_{M} LM) \ar[r]^(0.6){\gamma^{\prime}_{*}} & H_{*\dim M}(LM)  \\
	H_{*}(LM\times LM) \ar[u]^{(\rho_{1/2}\times id)_{*}} \ar[r]^(0.4){P.T.} & H_{*}((LM\times_{M} LM)^{TM}) \ar[u]^{\widetilde{(\rho_{1/2}\times_{1/2} id)}_{*}} \ar[r]^{Thom.} & H_{*\dim M}(LM\times_{M} LM) \ar[u]^{(\rho_{1/2}\times_{1/2} id)_{*}}\ar[r]^(0.6){\gamma_{*}} & H_{*\dim M}(LM) \ar[u]^{\rho_{1/4*}}
	}
	\end{equation}
	commutes.
	\end{lem}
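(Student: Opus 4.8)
The plan is to show that the top and bottom rows of the displayed diagram compute the same homomorphism, by verifying that the three left-hand squares commute and then observing that the two outermost vertical maps, $(\rho_{1/2}\times id)_{*}$ and $\rho_{1/4*}$, are the identity on homology. For the latter, note that each reparametrisation $\rho_{s}$ (rotation of the loop parameter by $s$, i.e.\ $\rho_{s}(\ell)(t)=\ell(t+s)$) is homotopic to $id_{LM}$ through the rotations $\rho_{us}$, $u\in[0,1]$; the same homotopy works fibrewise for $\rho_{1/2}\times id$ on $LM\times LM$, for $\rho_{1/2}\times_{1/2}id$ on $LM\times_{M}LM$, and for $\rho_{1/4}$ on $LM$. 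Hence all the vertical maps induce the identity on homology, so once the squares commute we get $\gamma'_{*}\circ(\mathrm{Thom})\circ(\mathrm{P.T.})=\gamma_{*}\circ(\mathrm{Thom})\circ(\mathrm{P.T.})$, which is precisely the assertion that $\gamma'$ defines the loop product.

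First I would check the rightmost square, which is a statement at the level of spaces: a direct parametrisation check shows that $\gamma'\circ(\rho_{1/2}\times_{1/2}id)$ and $\rho_{1/4}\circ\gamma$ agree up to a (parameter-independent) reparametrisation of $S^{1}$, hence are homotopic. Concretely, rotating the concatenation $\gamma(l_{1},l_{2})=l_{1}*l_{2}$ by a quarter turn moves the basepoint to the midpoint of $l_{1}$ and produces the loop ``(second half of $l_{1}$), then $l_{2}$, then (first half of $l_{1}$)'', which is $\gamma'$ applied to the correspondingly reparametrised pair. Next, the middle square is the naturality of the Thom isomorphism: the rotation $\rho_{1/2}$ intertwines the relevant evaluation maps (it carries $ev_{1/2}$ to $ev_{0}$), so the induced map on fibre products is covered by a bundle isomorphism of the two pulled-back copies of $\nu_{\iota}$, this being the map $\widetilde{\rho_{1/2}\times_{1/2}id}$ on Thom spaces, and the Thom isomorphism is natural for such bundle maps.

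Finally, the leftmost square is the naturality of the Cohen--Klein Pontryagin--Thom collapse map. The pair $(\rho_{1/2}\times id,\ \rho_{1/2}\times_{1/2}id)$ constitutes a morphism of the pullback square defining $LM\times_{M}LM$ over the diagonal $\Delta : M\to M\times M$, compatible with the evaluation maps, and the generalised Pontryagin--Thom construction of \cite{CK} is natural with respect to such morphisms of finite-codimension embeddings; this yields commutativity of the first square. Assembling the three squares and using $(\rho_{1/2}\times id)_{*}=\mathrm{id}$ and $\rho_{1/4*}=\mathrm{id}$ then finishes the proof.

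I expect the main obstacle to lie in the bookkeeping for the last two steps: one must track the evaluation maps carefully enough to see that $\rho_{1/2}\times_{1/2}id$ is well defined between the correct fibre products and that it lifts to an honest bundle isomorphism of the pulled-back normal bundles, and one must invoke the naturality of the homotopy-theoretic umkehr map of \cite{CK} under reparametrisation of the circle. Both are routine in spirit but require care; everything else reduces to the elementary fact that rotations of $S^{1}$ are homotopic to the identity.
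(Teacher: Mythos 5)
Your proof is correct and follows essentially the same route as the paper: you verify the space-level commutativity of the square $\gamma'\circ(\rho_{1/2}\times_{1/2}id)=\rho_{1/4}\circ\gamma$ together with the induced morphism of pullback squares, and then invoke naturality of the Pontrjagin--Thom collapse and the Thom isomorphism. Your added observation that the rotations $\rho_{s}$ are homotopic to the identity, so the outer vertical maps are the identity on homology, is a worthwhile explicit step that the paper leaves implicit when passing from ``the diagram commutes'' to ``$\gamma'$ defines the same product.''
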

	\begin{proof}
	It follows from the commutative diagram
	\begin{equation}
	\xymatrix{
	LM & & LM\times_{M} LM \ar[ll]^{\gamma^{\prime}} \ar@/^55pt/[dddd] \ar[rr] &  & LM\times LM \ar@/^50pt/[dddd]^(0.2){ev_{1/2}\times ev_{0}} \\
	&  &  &  & \\
	LM \ar[uu]^{\rho_{1/4}} & & LM\times_{M} LM \ar[ll]^{\gamma} \ar[dd] \ar@{-->}[uu]^{\rho_{1/2}\times_{1/2} id} \ar[rr]|\hole &  & LM\times LM \ar[uu]^{\rho_{1/2}\times id} \ar[dd]^{ev_{0}\times ev_{0}} \\
	&  & &  &  \\
	 & & M \ar[rr]^{\Delta} &  & M\times M  ,
	}
	\end{equation}
	and the naturality of Pontrjagin-Thom construction and Thom isomorphism, where $\rho_{a}$ is the parameter transformation map defined by $\rho_{a}(l) = (t \mapsto l(t+a))$.
	\end{proof}
\begin{prp} \label{prp5}
	For any $g,h \in G$, the diagram
	\begin{equation} \label{diagram2}
	\xymatrix{
	H_{p}(LM) \otimes H_{q}(LM)\ar[r]^(0.57){\circ} &  H_{p+q-n}(LM) \\
	H_{p}(P_{g}M) \otimes H_{q}(P_{h}M) \ar[r]^(0.57){\circ} \ar[u]^{\tau_{g*}\otimes \tau_{h*}} & H_{p+q-n}(P_{gh}M) \ar[u]^{\tau_{gh*}}
	}
	\end{equation}
	commutes.
	\end{prp}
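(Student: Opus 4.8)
To prove Proposition \ref{prp5}, the plan is to unwind the loop product on both sides of \eqref{diagram2} into its three ingredients --- the Pontrjagin--Thom collapse, the Thom isomorphism, and the concatenation --- and to check that $\tau_{g*}$, $\tau_{h*}$ and $\tau_{gh*}$ intertwine each one, reading off the verifications from the cube of pullbacks \eqref{pb} together with Lemmas \ref{l} and \ref{lem5}. Before starting I would use Lemma \ref{lem5} to replace the loop product along the top of \eqref{diagram2} by the product built from the symmetric concatenation $\gamma^{\prime}$ and the fibre product $LM\times_{M}LM$ formed with $ev_{1/2}\times ev_{0}$; since each reparametrisation $\rho_{a}$ is homotopic to the identity, Lemma \ref{lem5} says this substitution is legitimate.

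With that replacement in force, the role of \eqref{pb} is that it is a \emph{morphism of pullback squares over the single finite-codimension embedding} $\Delta : M \to M\times M$. First I would record that $ev_{1/2}\circ\tau_{g}=ev_{1}$ and $ev_{0}\circ\tau_{h}=ev_{0}$, so that $\tau_{g}\times\tau_{h}: P_{g}M\times P_{h}M \to LM\times LM$ lies over $\mathrm{id}_{M\times M}$ and the induced map $\tau_{1/2}$ on fibre products lies over $\mathrm{id}_{M}$; in particular $ev_{1/2}\circ\tau_{1/2}=ev_{1/2}$. Consequently: (a) the normal bundles $ev_{1/2}^{*}\nu_{\iota}$ of the two fibre products correspond under $\tau_{1/2}$, and naturality of the generalised Pontrjagin--Thom construction of \cite{CK} makes the collapse-map square commute, with right-hand vertical the map of Thom spaces induced by $\tau_{1/2}$; (b) the Thom classes on both Thom spaces are pulled back from the Thom class of $\nu_{\iota}$ on $M$ (which is oriented since $M$ is closed oriented) along $ev_{1/2}$, and because $ev_{1/2}\circ\tau_{1/2}=ev_{1/2}$ the induced map of Thom spaces is a bundle map respecting Thom classes, so naturality of the Thom isomorphism makes the Thom square commute, with right-hand vertical $\tau_{1/2*}$; (c) the concatenation square --- with $\gamma$ on the $P$-side, $\gamma^{\prime}$ on the $LM$-side, and verticals $\tau_{1/2*}$, $\tau_{gh*}$ --- is precisely Lemma \ref{l}, and this is where the hypothesis that $\Phi_{*}$ is trivial enters. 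Finally the initial cross-product square commutes by naturality of the homology cross product, i.e. $(\tau_{g}\times\tau_{h})_{*}=\tau_{g*}\otimes\tau_{h*}$ under the K\"unneth isomorphism.

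Pasting the cross-product square with squares (a), (b) and (c), and reading the resulting $LM$-side composite back as the loop product $\circ$ via Lemma \ref{lem5}, yields the commutativity of \eqref{diagram2}. The one step that is not mere bookkeeping is (a): one needs to know that the Cohen--Klein umkehr construction is natural with respect to a morphism of pullback diagrams along a fixed embedding, i.e. that collapsing commutes with $\tau_{g}\times\tau_{h}$ and with $\tau_{1/2}$. I expect this to be the main obstacle; it is implicit in \cite{CK}, and the homotopy-theoretic model of the umkehr map used in \cite{KT} makes it transparent, so once it is granted the remaining argument is a diagram chase.
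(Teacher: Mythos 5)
Your proposal follows essentially the same route as the paper's proof: decompose the product into cross product, Pontrjagin--Thom collapse, Thom isomorphism and concatenation, use the cube of pullbacks \eqref{pb} and naturality of the umkehr and Thom maps for the first three squares, invoke Lemma \ref{l} for the concatenation square, and identify the $\gamma^{\prime}$-composite with the loop product via Lemma \ref{lem5}. Your added observations (that $\tau_{g}\times\tau_{h}$ lies over $\mathrm{id}_{M\times M}$, and that the triviality of $\Phi_{*}$ enters only through Lemma \ref{l}) are correct and merely make explicit what the paper leaves implicit.
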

	\begin{proof}
	Because of the diagram (\ref{pb}) and Lemma \ref{l}, by the naturality of Pontrjagin-Thom map and Thom isomorphism, we obtain the following commutative diagram
	\begin{equation}
	\xymatrix{
	H_{*}(LM\times LM) \ar[r] & H_{*}((LM\times_{M} LM)^{TM}) \ar[r] & H_{*\dim M}(LM\times_{M} LM) \ar[r] & H_{*\dim M}(LM)  \\
	H_{*}(P_{g}M\times P_{h}M) \ar[u]^{(\tau_{g}\times \tau_{h})_{*}} \ar[r] & H_{*}((P_{g}M\times_{M} P_{h}M)^{TM}) \ar[u]^{\widetilde{\tau_{1/2*}}} \ar[r] & H_{*\dim M}(P_{g}M\times_{M} P_{h}M) \ar[u]^{\tau_{1/2*}} \ar[r] & H_{*\dim M}(P_{gh}M) \ar[u]^{\tau_{gh*}}  ,
	}
	\end{equation}
	which implies the diagram (\ref{diagram2}) is commutative by Lemma \ref{lem5}.
	\end{proof}
\begin{proof}[Proof of Thoerem \ref{thm}]
	By the definition of the loop product, diagram (\ref{diagram1}) is same as the following diagram
	\begin{equation}
	\xymatrix{
	H_{p}\left(\displaystyle \coprod_{g} P_{g}M\right)^{G}\otimes H_{q}\left(\displaystyle \coprod_{g} P_{g}M\right)^{G}  \ar@{^{(}->}[d] \ar[r]^{\tau_{*}\otimes \tau_{*}} &  H_{p}\left(\displaystyle \coprod_{g} LM\right)^{G}\otimes H_{q}\left(\displaystyle \coprod_{g} LM\right)^{G}  \ar@{^{(}->}[d]\\
H_{p}\left(\displaystyle \coprod_{g} P_{g}M\right)\otimes H_{q}\left(\displaystyle \coprod_{g} P_{g}M\right) \ar[r]^{\tau_{*}\otimes \tau_{*}} \ar[d]^(0.58){\circ} & H_{p}\left(\displaystyle \coprod_{g} LM\right)\otimes H_{q}\left(\displaystyle \coprod_{g} LM\right) \ar[d]^(0.58){\circ} \\
H_{p+q-\dim M}\left(\displaystyle \coprod_{g} P_{g}M\right) \ar[r]^{\tau_{*}} \ar[d]^{\sigma_{*}} & H_{p+q-\dim M}\left(\displaystyle \coprod_{g} LM\right) \ar[d]^{\sigma_{*}} \\
H_{p+q-\dim M}\left(\displaystyle \coprod_{g} P_{g}M\right)^{G} \ar[r]^{\tau_{*}} & H_{p+q-\dim M}\left(\displaystyle \coprod_{g} LM\right)^{G}  ,
	 }
	\end{equation}	
	where $\sigma_{*}$ denotes composition of projection and isomorphism $\mu_{*}$ defined in the proof of Proposition \ref{prp1},
	\begin{equation}
	\xymatrix{
	H_{*}(LM\times G) \ar[r]^(0.45){pr_{*}} & H_{*}((LM\times G)/G) \ar[r]^{\mu_{*}}_{\cong} & H_{*}(LM\times G)^{G}  ,
	}
	\end{equation}
	and
	\begin{equation}
	\xymatrix{
	H_{*}\left(\displaystyle \coprod_{g}P_{g}M\right) \ar[r]^(0.45){pr_{*}} & H_{*}(\left (\displaystyle \coprod_{g}P_{g}M\right )/G) \ar[r]^{\mu_{*}}_{\cong} & H_{*}\left(\displaystyle \coprod_{g}P_{g}M\right)^{G}  .
	}
	\end{equation}
	The $G$-equivariance of $\tau_{*}$ implies the commutativity of the upper square. And Proposition \ref{prp5} implies the commutativity of the middle square. The commutativity of the lower square follows from the definition of $\mu_{*}$ and $G$-equivariance of $\tau_{*}$. 
\end{proof}
	\section{Examples}
	In this section, we will compute some examples of orbifold loop homology. Before that, we will proof some propositions which is useful to apply our main theorem and to compute examles.
	\begin{prp}
	If $\mathcal{G}$ is simply connected, then the action $\Phi_{*}$ is trivial for any field $k$.
	\end{prp}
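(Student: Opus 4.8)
The plan is to reduce the statement to the single observation that, when $\mathcal{G}$ is simply connected, the based loop space $\Omega\mathcal{G}$ is path connected, since $\pi_{0}(\Omega\mathcal{G}) = \pi_{1}(\mathcal{G}) = 0$. Consequently $H_{0}(\Omega\mathcal{G};k) \cong k$, generated by the class $[\mathrm{c}_{e}]$ of the constant loop at the unit $e \in \mathcal{G}$. Because the action $\Phi_{*}$ is $k$-linear in the first variable, triviality of the $H_{0}(\Omega\mathcal{G};k)$-action on $H_{*}(LM;k)$ is then equivalent to verifying the single identity $\Phi_{*}([\mathrm{c}_{e}]\otimes x) = x$ for every $x \in H_{*}(LM;k)$, where one uses the Künneth isomorphism to regard $H_{0}(\Omega\mathcal{G};k)\otimes H_{*}(LM;k)$ as a summand of $H_{*}(\Omega\mathcal{G}\times LM;k)$.

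First I would record that, straight from the defining formula (\ref{nat1}), $\Phi(\mathrm{c}_{e},l) = (t\mapsto e\cdot l(t)) = l$; equivalently, writing $j\colon\{\mathrm{c}_{e}\}\hookrightarrow\Omega\mathcal{G}$ for the inclusion, one has $\Phi\circ(j\times\mathrm{id}_{LM}) = \mathrm{id}_{LM}$. By definition the class $[\mathrm{c}_{e}]$ acts on $x$ by sending it to the image of $x$ under $H_{*}(LM;k)\xrightarrow{(j\times\mathrm{id})_{*}}H_{*}(\Omega\mathcal{G}\times LM;k)\xrightarrow{\Phi_{*}}H_{*}(LM;k)$, and this composite equals $\big(\Phi\circ(j\times\mathrm{id})\big)_{*} = (\mathrm{id}_{LM})_{*}$, so it carries $x$ to $x$. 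Together with the previous paragraph, this proves the proposition.

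I do not expect a genuine obstacle here; the argument is essentially formal. The only point deserving a word of care is the precise reading of ``the action of $H_{0}(\Omega\mathcal{G};k)$ is trivial'': it means that every generator of $H_{0}$ acts as the identity, and in the simply connected case there is only the one generator $[\mathrm{c}_{e}]$ to check. In particular the choice of base point used to represent the degree-zero class is immaterial, since path connectedness of $\Omega\mathcal{G}$ forces every point of $\Omega\mathcal{G}$ to represent $[\mathrm{c}_{e}]$ in $H_{0}$ --- which is also, implicitly, why the paths $\theta_{g}$ appearing in the proof of Proposition \ref{prp4} and Lemma \ref{prp2} could be chosen freely.
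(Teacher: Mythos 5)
Your argument is correct and is essentially the same as the paper's one-line proof: since $\mathcal{G}$ is simply connected, $\pi_{0}(\Omega\mathcal{G})=\pi_{1}(\mathcal{G})=0$, so $H_{0}(\Omega\mathcal{G};k)$ is spanned by the class of the constant loop at the unit, which acts as the identity by the defining formula for $\Phi$. Your write-up merely makes explicit the details the paper leaves implicit.
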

	\begin{proof}
	Because $H_{0}(\Omega \mathcal{G})$ is spanned by only the class represented by the identity loop, its action on $H_{*}(LM)$ is trivial.
	\end{proof}
	\begin{prp} \label{ex2}
	If $H_{n}(LM;k) = k$ for a field $k$, then the action $\Phi_{*}$ is trivial for any pair $(\mathcal{G},G)$.
	\end{prp}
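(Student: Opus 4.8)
The plan is to fix a based loop $a$ in $\mathcal{G}$ (a point of $\Omega\mathcal{G}$, so $a(0)=a(1)=e$) and to prove that $\phi_{a}:=\Phi(a,-)_{*}\colon H_{*}(LM;k)\to H_{*}(LM;k)$ is the identity; since $H_{0}(\Omega\mathcal{G};k)$ is spanned by the classes of such points, triviality of $\Phi_{*}$ means precisely $\phi_{a}=\mathrm{id}$ for every $a$. I would first record two elementary facts. The action axiom $g\cdot(h\cdot x)=(gh)\cdot x$ gives $\Phi(a,-)\circ\Phi(b,-)=\Phi(a\cdot b,-)$ for the pointwise product $(a\cdot b)(t)=a(t)b(t)$, and $\Phi(e,-)=\mathrm{id}_{LM}$; hence $\Phi(a,-)$ is a homeomorphism, with inverse $\Phi(\bar a,-)$ where $\bar a(t)=a(t)^{-1}$, so $\phi_{a}\in\mathrm{Aut}(H_{*}(LM;k))$. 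And since $a(0)=e$ we have $ev\circ\Phi(a,-)=ev$ for the evaluation $ev\colon LM\to M$, $ev(l)=l(0)$, hence $ev_{*}\circ\phi_{a}=ev_{*}$.

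The core of the argument would be the identity
\[
\phi_{a}(u)\circ\phi_{a}(v)=\Phi(a*a,-)_{*}(u\circ v)\qquad(u,v\in H_{*}(LM;k)),
\]
where $a*a\in\Omega\mathcal{G}$ denotes the concatenation of $a$ with itself. To establish it I would go through Definition~\ref{def1}. Since $a(0)=e$, the homeomorphism $\Phi(a,-)\times\Phi(a,-)$ of $LM\times LM$ lies over $M\times M$, so it restricts to a self-homeomorphism $\widetilde{\Phi}$ of $LM\times_{M}LM$ covering $\mathrm{id}_{M}$, making the square through $\tilde{\iota}$ cartesian; consequently $\widetilde{\Phi}$ pulls $ev_{1/2}^{*}\nu_{\iota}$ back to itself, and by naturality of the Cohen--Klein Pontryagin--Thom collapse map and of the Thom isomorphism, these two steps of the loop product commute with $\Phi(a,-)\times\Phi(a,-)$ and $\widetilde{\Phi}$. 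A direct computation shows $\gamma\circ\widetilde{\Phi}=\Phi(a*a,-)\circ\gamma$, because concatenating the two translated loops reparametrises the ambient group translation into $a*a$. Combining these facts gives the displayed identity. Finally, $a*a$ and the pointwise square $a\cdot a$ represent the same element of $\pi_{1}\mathcal{G}$, hence lie in one path component of $\Omega\mathcal{G}$, so $\Phi(a*a,-)_{*}=\Phi(a\cdot a,-)_{*}=\phi_{a}^{2}$ by the first of the two facts above.

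To finish I would bring in the hypothesis. The unit of the loop product is $\mathbf{1}=c_{*}[M]$, the image of the fundamental class under the constant-loop section $c\colon M\to LM$; since $ev\circ c=\mathrm{id}_{M}$ we have $ev_{*}\mathbf{1}=[M]\neq 0$, so $\mathbf{1}\neq 0$, and $H_{n}(LM;k)=k$ (with $n=\dim M$) forces $\mathbf{1}$ to span $H_{n}(LM;k)$. Thus $\phi_{a}(\mathbf{1})=\mu\mathbf{1}$ for some $\mu\in k^{\times}$. Putting $v=\mathbf{1}$ in the displayed identity and using $x\circ\mathbf{1}=x$ gives $\mu\,\phi_{a}(u)=\phi_{a}^{2}(u)$ for all $u$, so $\phi_{a}=\mu\cdot\mathrm{id}$; then $ev_{*}=ev_{*}\circ\phi_{a}=\mu\,ev_{*}$ with $ev_{*}\neq 0$ forces $\mu=1$, whence $\phi_{a}=\mathrm{id}$.

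The step I expect to be the main obstacle is the second paragraph: checking carefully that the loop product of Definition~\ref{def1} is natural with respect to the homeomorphism $\Phi(a,-)$ — that the square of Cohen--Klein data is cartesian and that the umkehr map and the Thom isomorphism are functorial in this infinite-dimensional setting — and verifying the reparametrisation identity $\gamma\circ\widetilde{\Phi}=\Phi(a*a,-)\circ\gamma$ together with the remark that $[a*a]=[a\cdot a]$ in $\pi_{1}\mathcal{G}$. Everything after that is formal, collapsing to the single equation $\phi_{a}=\mu\cdot\mathrm{id}$ and then $\mu=1$.
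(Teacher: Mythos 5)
Your proof is correct, and it shares the paper's overall skeleton --- reduce triviality of $\Phi_{*}$ to the statement $\phi_{a}(\mathbf{1})=\mathbf{1}$ for the unit $\mathbf{1}=c_{*}[M]$ of the loop product, then propagate to all of $H_{*}(LM;k)$ via compatibility of $\Phi_{*}$ with $\circ$ --- but the key input is genuinely different. The paper imports Hepworth's one-sided module identity $\alpha\cdot(x\circ y)=(\alpha\cdot x)\circ y$ (Lemma \ref{act}, cited from \cite{He2}) as a black box, and pins down $\Phi_{a*}\mathbf{1}$ by sandwiching the constant-loop section $c$ between $\Phi_{a}$ and $ev$ and using $H_{\dim M}(LM)=H_{\dim M}(M)=k$. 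You instead prove from scratch the two-sided multiplicativity $\phi_{a}(u)\circ\phi_{a}(v)=\Phi(a\ast a,-)_{*}(u\circ v)$ via naturality of the Cohen--Klein collapse and the on-the-nose identity $\gamma\circ\widetilde{\Phi}=\Phi(a\ast a,-)\circ\gamma$ (which does hold: $a(0)=e$ makes $\Phi(a,-)\times\Phi(a,-)$ preserve the figure-eight space, and concatenating the two translated loops reassembles the translation by $a\ast a$), then use the Eckmann--Hilton observation $[a\ast a]=[a\cdot a]$ and invertibility of $\phi_{a}$ to extract $\phi_{a}=\mu\cdot\mathrm{id}$, with $ev_{*}\circ\phi_{a}=ev_{*}$ forcing $\mu=1$. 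What your route buys is self-containedness --- your multiplicativity identity is exactly the kind of naturality the paper itself verifies elsewhere (Lemma \ref{lem5}, Proposition \ref{prp5}), so you avoid citing \cite{He2}; what it costs is the extra diagram-chasing you flag in your second paragraph, which is indeed the only nontrivial verification but is of the same standard type as the paper's own arguments and does go through.
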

To prove this proposition, we use the following lemma shown by Hepworth.
\begin{lem}[\cite{He2}]\label{act}
Let $\Phi$ be the natural map defined by (\ref{nat1}). Then the induced linear action $\Phi_{*} : H_{0}(\Omega \mathcal{G})\otimes H_{*}(LM) \longrightarrow H_{*}(LM)$ is an algebra action, namely for any $\alpha \in H_{0}(\Omega \mathcal{G})$ and any $x,y \in H_{*}(LM)$, they satisfy
\begin{equation}
\alpha \cdot (x \circ y) = (\alpha \cdot x)\circ y = x\circ (\alpha \cdot y) .
\end{equation}
\end{lem}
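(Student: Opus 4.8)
The plan is to reduce the assertion to a map-level compatibility and then verify it using the naturality of the Pontryagin--Thom construction together with a single explicit homotopy. Since the lemma only concerns $H_0(\Omega\mathcal{G})$, the crucial simplification is available at the outset: a degree-zero class $\alpha$ is a linear combination of classes represented by \emph{points} of $\Omega\mathcal{G}$, i.e. by based loops $a\colon S^{1}\to\mathcal{G}$ with $a(0)=a(1)=e$. By bilinearity it suffices to treat one such $a$, and then $\alpha\cdot(-)$ is induced by the honest continuous map $\Phi(a,-)\colon LM\to LM$, $l\mapsto\bigl(t\mapsto a(t)\cdot l(t)\bigr)$ of (\ref{nat1}). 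By symmetry between the two concatenation slots I would prove only $(\alpha\cdot x)\circ y=\alpha\cdot(x\circ y)$; the identity $x\circ(\alpha\cdot y)=\alpha\cdot(x\circ y)$ follows from the entirely analogous argument applied to the second factor.

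The key elementary observation is that, because $a(0)=e$ acts as the identity on $M$, the map $\Phi(a,-)$ preserves evaluation at the basepoint: $ev_{0}(\Phi(a,l))=a(0)\cdot l(0)=l(0)=ev_{0}(l)$. Hence $\Phi(a,-)\times\mathrm{id}$ is a self-map of $LM\times LM$ lying over the identity of $M\times M$ via $ev_{0}\times ev_{0}$, so it restricts to a self-map $\bar\Phi$ of the fibre product $LM\times_{M}LM$ lying over the identity of $M$ and covering the pulled-back normal bundle $ev^{*}\nu_{\iota}$ by a bundle map. By the naturality of the generalized Pontryagin--Thom collapse $\tilde{\iota}_{!*}$ and of the Thom isomorphism for bundle maps covering the identity on the base (the construction recalled before Definition \ref{def1}), I obtain
\[
T\circ\tilde{\iota}_{!*}\circ(\Phi(a,-)\times\mathrm{id})_{*}=\bar\Phi_{*}\circ T\circ\tilde{\iota}_{!*},
\]
where $T$ denotes the Thom isomorphism.

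It then remains to push $\bar\Phi$ through the concatenation $\gamma$ of (\ref{con1}). The two resulting loops do not agree on the nose: $\gamma\circ\bar\Phi$ twists the first half of the concatenated loop by $a$ run at double speed and leaves the second half untwisted, while $\Phi(a,-)\circ\gamma$ twists the whole concatenated loop by $a$ run at unit speed. I would connect them by an explicit homotopy that redistributes $a$ along the concatenation: for $s\in[0,1]$ put $c(s)=1-s/2$ and
\[
L_{s}(l_{1},l_{2})(t)=
\begin{cases}
a\bigl(2c(s)t\bigr)\cdot l_{1}(2t) & 0\le t\le 1/2,\\
a\bigl((1-c(s))(2t-1)+c(s)\bigr)\cdot l_{2}(2t-1) & 1/2\le t\le 1.
\end{cases}
\]
Because $a(0)=a(1)=e$ and $l_{1}(1)=l_{1}(0)=l_{2}(0)$ on $LM\times_{M}LM$, each $L_{s}$ is a genuine free loop, continuous at $t=1/2$ and at the endpoints; at $s=0$ it equals $\gamma\circ\bar\Phi$ and at $s=1$ it equals $\Phi(a,-)\circ\gamma$. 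Hence $\gamma_{*}\circ\bar\Phi_{*}=\Phi(a,-)_{*}\circ\gamma_{*}$, and composing the three identities yields $(\alpha\cdot x)\circ y=\alpha\cdot(x\circ y)$.

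The main obstacle is precisely this last step: one must check that the redistribution homotopy $L_{s}$ stays inside the class of maps to which the earlier naturality applies, i.e. that every $L_{s}$ is a well-defined free loop and that its two branches glue continuously at the concatenation parameter $t=1/2$. Both requirements rely exactly on $a$ being based at the identity and on the two loops sharing their basepoint in $LM\times_{M}LM$; everything upstream of $\gamma$ is then formal naturality once this basepoint is preserved, so the entire geometric content lies in exhibiting $L_{s}$.
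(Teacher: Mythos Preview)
The paper does not give its own proof of this lemma; it is quoted as a result of Hepworth \cite{He2} and is used as a black box in the proof of Proposition~\ref{ex2}. So there is nothing in the paper to compare your argument against directly.

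That said, your proof is correct and is the natural argument one would write here. The reduction to a single based loop $a$ (so that $\alpha\cdot(-)$ is literally $(\Phi(a,-))_{*}$), the observation that $\Phi(a,-)$ covers the identity of $M$ via $ev_{0}$ because $a(0)=e$, and the resulting naturality of the Pontryagin--Thom collapse and Thom isomorphism are exactly the right inputs. Your explicit homotopy $L_{s}$ does what you claim: at $s=0$ you get $\gamma(\Phi(a,l_{1}),l_{2})$ and at $s=1$ you get $\Phi(a,\gamma(l_{1},l_{2}))$, with continuity at $t=1/2$ and at the endpoints guaranteed by $a(0)=a(1)=e$ and $l_{1}(0)=l_{2}(0)$. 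The appeal to symmetry for the second identity is also fine, either by the analogous homotopy on the second factor or, as you hint, by graded commutativity of the loop product (note that $\alpha$ has degree~$0$, so no sign appears). In short: your proof supplies what the paper merely cites, and it is sound.
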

	\begin{proof}
	Let $a$ be a loop in $\mathcal{G}$ which starts and ends at the unit. We should show that the action $\Phi_{a*} = \Phi_{*}([a], -) : H_{*}(LM) \longrightarrow H_{*}(LM) $ is trivial, where $\Phi_{a}$ denotes the map $\Phi (a, -)$. We also denote by $\Phi_{a}$ the restriction map of the map $\Phi (a, -) : LM \longrightarrow LM$ to $M \subset LM$, where we regard $M$ as the image of the map $ c : M \longrightarrow LM $ which assigns the constant loop.
	
We consider the  sequence of maps
	\begin{equation}
	\xymatrix{
	  &  &  \\
		H_{\dim M}(M) \ar@<0.5ex>[r]^{\Phi_{a*}} \ar@<-0.5ex>[r]_{c_{*}} \ar@/^30pt/[rr]^{id_{*}} \ar@{}[urr]|{\circlearrowright} & H_{\dim M}(LM) \ar[r]^{ev_{*}} & H_{\dim M}(M) .
	}
	\end{equation}
	As $H_{\dim M}(LM) = H_{\dim M}(M) = k$, we obtain $\Phi_{a*} = c_{*} = id$. Hence for any $x \in H_{*}(LM)$, we have the equality by Lemma \ref{act}
	\begin{eqnarray*}
	\Phi_{a*}x & = & \Phi_{a*}([M] \circ x) \\
	& = & (\Phi_{a*} [M]) \circ x \\
	& = & x.
	\end{eqnarray*}
	Therefore the action of $H_{0}(\Omega \mathcal{G})$ on $H_{*}(LM)$ is trivial.
	\end{proof} 
	\begin{prp}\label{ex1}
	Let $k$ be a field. If the conditions
	\begin{itemize}
	\item[(i)] $\pi_{1}M = 1$,
	\item[(ii)] $|\pi_{1}\mathcal{G}| =: r < \infty$,
	\item[(iii)] the homomorphism $H_{*}(\Omega M;k) \longrightarrow H_{*}(LM;k)$ induced by the inclusion $\Omega M \longrightarrow LM$ is injective, namely the free loop fibration $\Omega M \to LM \to M$ is Totally Non-Cohomologous to Zero (TNCZ) with respect to the field $k$,
	\item[(iv)] the characteristic of $k$ is coprime to $r$,
	\end{itemize}
	are satisfied, then the action $\Phi_{*}$ is trivial.
	\end{prp}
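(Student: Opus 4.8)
The plan is to show that for every loop $a$ in $\mathcal{G}$ based at the unit the operator $\Phi_{a*}=\Phi_{*}([a],-)\colon H_{*}(LM;k)\to H_{*}(LM;k)$ is the identity; since $H_{0}(\Omega\mathcal{G};k)=k[\pi_{0}\Omega\mathcal{G}]=k[\pi_{1}\mathcal{G}]$, this is exactly what "$\Phi_{*}$ trivial" means. First I would record two elementary facts. Pointwise multiplication gives $\Phi(a,\Phi(b,l))=\Phi(a\cdot b,l)$ with $(a\cdot b)(t)=a(t)b(t)$, and $[a\cdot b]=[a]+[b]$ in $\pi_{1}\mathcal{G}$, so $[a]\mapsto\Phi_{a*}$ is a homomorphism $\pi_{1}\mathcal{G}\to\mathrm{Aut}(H_{*}(LM;k))$; in particular $\Phi_{a*}^{\,r}=\mathrm{id}$ with $r=|\pi_{1}\mathcal{G}|$, so by (ii) and (iv) the order of $\Phi_{a*}$ is prime to $\mathrm{char}\,k$ and $\Phi_{a*}$ is semisimple (its minimal polynomial divides the separable polynomial $x^{r}-1$). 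Second, because $a(0)=a(1)=e$ we have $ev\circ\Phi_{a}=ev$ and $\Phi_{a}(\Omega_{x_{0}}M)\subseteq\Omega_{x_{0}}M$, so $\Phi_{a}$ is a fibrewise self-map of the free loop fibration $\Omega M\to LM\xrightarrow{ev}M$ covering $\mathrm{id}_{M}$, restricting on the fibre to $\Phi_{a}|\colon\Omega M\to\Omega M$, $l\mapsto(t\mapsto a(t)\cdot l(t))$.

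The key lemma I would prove is that $\Phi_{a}|\simeq\mathrm{id}_{\Omega M}$ through based maps. For $l\in\Omega_{x_{0}}M$ consider the square $A_{l}\colon[0,1]^{2}\to M$, $A_{l}(u,t)=a(u)\cdot l(t)$; restricting $A_{l}$ to the diagonal yields $\Phi_{a}|(l)$, while restricting it to the path that runs along the bottom edge ($t=0$) and then up the right edge ($u=1$) yields the concatenation $\bar a * l$, where $\bar a(u)=a(u)\cdot x_{0}$ is the image of $a$ under the orbit map $\mathcal{G}\to M$. A fixed homotopy of paths in the square from the diagonal to this L-shaped path, composed with $A_{l}$, gives a based homotopy $\Phi_{a}|\simeq L_{\bar a}$ natural in $l$, where $L_{\bar a}(l)=\bar a * l$. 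By hypothesis (i) the loop $\bar a$ is null-homotopic rel basepoint, hence $L_{\bar a}\simeq L_{c_{x_{0}}}\simeq\mathrm{id}_{\Omega_{x_{0}}M}$, and therefore $(\Phi_{a}|)_{*}=\mathrm{id}$ on $H_{*}(\Omega M;k)$.

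Next I would feed this into the Serre spectral sequence of $\Omega M\to LM\xrightarrow{ev}M$. Since $\pi_{1}M=1$ the coefficient system is trivial, so $E^{2}_{p,q}=H_{p}(M;k)\otimes H_{q}(\Omega M;k)$; by (iii) the fibration is TNCZ, which forces the spectral sequence to collapse ($E^{2}=E^{\infty}$ — over a field this is Leray--Hirsch applied to the dual cohomology statement $H^{*}(LM;k)\twoheadrightarrow H^{*}(\Omega M;k)$). The fibrewise map $(\Phi_{a},\mathrm{id}_{M})$ induces a morphism of spectral sequences acting on $E^{2}$ as $\mathrm{id}_{H_{*}(M)}\otimes(\Phi_{a}|)_{*}$, which is the identity by the lemma. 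As $\Phi_{a}$ covers $\mathrm{id}_{M}$ it preserves the Serre filtration of $H_{*}(LM;k)$, and it induces the identity on the associated graded $E^{\infty}$; hence $\Phi_{a*}-\mathrm{id}$ is nilpotent. Finally, a semisimple unipotent operator is the identity (equivalently, writing $\Phi_{a*}=\mathrm{id}+N$ with $N$ nilpotent and expanding $(\mathrm{id}+N)^{r}=\mathrm{id}$ with $r$ invertible in $k$ forces $N=0$), so $\Phi_{a*}=\mathrm{id}$, as wanted.

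The main obstacle, and the reason all four hypotheses enter, is exactly this last bootstrap: $\Phi_{a}$ itself need not be homotopic to $\mathrm{id}_{LM}$ even though its restriction to the fibre is, so one cannot conclude $\Phi_{a*}=\mathrm{id}$ by a naive homotopy argument; instead one combines triviality on the fibre (which consumes (i)) with the collapse of the spectral sequence (iii) to get triviality on the associated graded, and then uses the order condition (ii),(iv) to promote this to triviality on $H_{*}(LM;k)$. A secondary technical point to handle carefully is the precise implication "TNCZ $\Rightarrow$ collapse of the homology spectral sequence", which I would phrase via the dual cohomology formulation and Leray--Hirsch.
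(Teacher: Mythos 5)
Your proof is correct, but it follows a genuinely different route from the paper's. The paper first reduces the problem to a single ring element: by Hepworth's lemma that $\Phi_{*}$ is an action by algebra maps (Lemma \ref{act}), triviality of $\Phi_{a*}$ on all of $H_{*}(LM;k)$ follows once $\Phi_{a*}[M]=1$ in $\mathbb{H}_{0}(LM)$. It then uses the TNCZ hypothesis only through the ring isomorphism $\mathbb{H}_{*}(LM)\cong H_{*}(\Omega M)\otimes\mathbb{H}_{*}(M)$, pins down the coefficient of $1\otimes[M]$ in $\Phi_{a*}[M]$ via the evaluation map, observes that the remaining components are nilpotent (they involve $\mathbb{H}_{<0}(M)$), invokes Hepworth's second lemma (\ref{rng}) that $[a]\mapsto[a]\cdot[M]$ is multiplicative to get $(\Phi_{a*}[M])^{r}=1$, and concludes by the same separability/gcd argument you use. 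You instead work with the whole operator $\Phi_{a*}$: your explicit fibrewise homotopy $\Phi_{a}|\simeq L_{\bar a}\simeq\mathrm{id}_{\Omega M}$ (consuming (i)), combined with the collapse of the Serre spectral sequence forced by TNCZ, shows $\Phi_{a*}$ is unipotent, and the homomorphism $\pi_{1}\mathcal{G}\to\mathrm{Aut}(H_{*}(LM;k))$ shows it is semisimple; both proofs end with the identical observation that a unipotent operator of order $r$ invertible in $k$ is the identity. What your approach buys is independence from the loop product and from Hepworth's two multiplicativity lemmas --- you never need $\Phi_{*}$ to be an algebra action, only a fibrewise map over $\mathrm{id}_{M}$ --- at the cost of invoking the Serre spectral sequence and the (standard, but worth stating precisely, as you note) implication TNCZ $\Rightarrow$ collapse, which requires $H_{*}(\Omega M;k)$ to be of finite type in each degree; this holds here since $M$ is a simply connected closed manifold. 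The paper's approach stays entirely inside the loop homology ring and uses the spectral sequence only implicitly through the module isomorphism.
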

	For the proof of Proposition \ref{ex1}, we use the following lemma shown by Hepworth.
	\begin{lem}[\cite{He2}]\label{rng}
	Let $\mathcal{G}$ be a topological group acting continuously on a closed oriented manifold $M$. Then the  homomorphism $\Phi _{*}': H_{*}(\Omega \mathcal{G} ;k) \longrightarrow H_{*}(LM ;k) $ with
	\begin{equation}
	\Phi_{*}'[a] = [a]\cdot [M] ,
	\end{equation}
	 commutes with the products, namely they satisfy
\begin{equation}
\Phi_{*}'[a]\circ \Phi_{*}'[a'] = \Phi_{*}'([a]\cdot [a']),
\end{equation}
where the product in the right hand side denotes the Pontrjagin product.
	\end{lem}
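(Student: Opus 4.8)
The plan is to prove the statement directly from the defining diagram of the loop product, by recognizing the orbit classes $\Phi_{*}'[a]$ as external products of a fibre class with the fundamental class of the base. Write $\psi : \Omega\mathcal{G}\times M \to LM$ for the composite of $\mathrm{id}\times c$ with the map $\Phi$ of (\ref{nat1}), so that $\psi(a,m)(t) = a(t)\cdot m$ and $\Phi_{*}'(\alpha) = \psi_{*}(\alpha\times[M])$, where $[M]$ is the fundamental class of $M$ and $c$ the constant-loop inclusion. Since $a$ is based at the unit, $\psi(a,m)$ is a loop based at $m$, i.e. every evaluation map occurring in the loop product takes the constant value $m$ on it. Moreover $\Phi$ is a left action of the topological group $\Omega\mathcal{G}$ under pointwise multiplication, so the Pontrjagin product $\cdot$ on $H_{*}(\Omega\mathcal{G};k)$ may be computed either from pointwise multiplication or from concatenation of loops, the two agreeing by the Eckmann--Hilton argument. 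The goal is then the identity $\Phi_{*}'(\alpha)\circ\Phi_{*}'(\alpha') = \Phi_{*}'(\alpha\cdot\alpha')$.

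First I would overlay the map $\psi\times\psi$ on the pullback diagram defining the loop product (Definition~\ref{def1}). Because the evaluation maps take the value $m$ on $\psi(a,m)$, the basepoint-matching pullback over the diagonal $\Delta : M \to M\times M$ restricts $\psi\times\psi$ to $\mathrm{id}_{\Omega\mathcal{G}\times\Omega\mathcal{G}}\times\Delta : \Omega\mathcal{G}\times\Omega\mathcal{G}\times M \hookrightarrow (\Omega\mathcal{G}\times M)^{2}$, a codimension-$\dim M$ embedding whose normal bundle is the pullback of $TM$. These data assemble into a map of Pontrjagin--Thom squares, so by naturality of the Cohen--Klein umkehr map \cite{CK} and of the Thom isomorphism, the umkehr--Thom step applied to $\Phi_{*}'(\alpha)\times\Phi_{*}'(\alpha') = (\psi\times\psi)_{*}(\alpha\times[M]\times\alpha'\times[M])$ is computed on $(\Omega\mathcal{G}\times M)^{2}$. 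There the umkehr touches only the two $M$-factors, where $\Delta^{!}([M]\times[M]) = [M]$; hence the surviving class is $\alpha\times\alpha'\times[M]$ on $\Omega\mathcal{G}\times\Omega\mathcal{G}\times M$.

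Next I would identify the concatenation step. The map $\gamma$ sends $(\psi(a,m),\psi(a',m))$ to the concatenation in $\Omega_{m}M$ of the two loops $a(\cdot)m$ and $a'(\cdot)m$. Through the orbit map $\Omega\mathcal{G}\to\Omega_{m}M$, which carries both the pointwise and the concatenation products of $\Omega\mathcal{G}$ to the corresponding loops at $m$, the Eckmann--Hilton homotopy interchanging concatenation and pointwise multiplication on $\Omega\mathcal{G}$ transports, naturally in $m$, to a homotopy $\gamma\circ(\psi\times_{M}\psi)\simeq\psi\circ(\mu\times\mathrm{id}_{M})$, where $\mu$ denotes pointwise multiplication. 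Applying $\gamma_{*}$ to $\alpha\times\alpha'\times[M]$ therefore yields $\psi_{*}((\alpha\cdot\alpha')\times[M]) = \Phi_{*}'(\alpha\cdot\alpha')$, and combining with the previous paragraph gives $\Phi_{*}'(\alpha)\circ\Phi_{*}'(\alpha') = \Phi_{*}'(\alpha\cdot\alpha')$, as required.

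I expect the umkehr-collapse step to be the main obstacle: one must check rigorously that the Cohen--Klein Pontrjagin--Thom construction for the infinite-dimensional embedding $LM\times_{M}LM\hookrightarrow LM\times LM$ pulls back along $\psi\times\psi$ to the finite-dimensional umkehr of $\Delta$, including compatibility of the tubular-neighborhood and normal-bundle data and of the two Thom isomorphisms, and that the resulting operation on the fundamental classes is $\Delta^{!}([M]\times[M]) = [M]$. The Eckmann--Hilton homotopy of the third paragraph is standard, but it has to be chosen naturally in $m$ so as to descend to the concatenation map on $LM$, and throughout one should keep track of the Koszul signs coming from the reordering of factors and from the gradings of the Pontrjagin and loop products.
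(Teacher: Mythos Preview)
The paper does not supply its own proof of this lemma: it is quoted from Hepworth \cite{He2} and used as a black box in the proof of Proposition~\ref{ex1}. So there is no argument in the paper to compare against, and your write-up stands on its own.

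Your approach is sound. The essential content is the map of pullback squares
\[
\xymatrix{
\Omega\mathcal{G}\times\Omega\mathcal{G}\times M \ar[r] \ar[d] & LM\times_{M}LM \ar[d] \\
(\Omega\mathcal{G}\times M)^{2} \ar[r]^{\ \ \psi\times\psi} \ar[d] & LM\times LM \ar[d] \\
M \ar@{=}[r] & M
}
\]
over $\Delta : M \to M\times M$, together with naturality of the Cohen--Klein umkehr and the Thom isomorphism, and the computation $\Delta^{!}([M]\times[M])=[M]$ (the unit axiom for the intersection product). These steps are correct, and your self-identified obstacle---checking that the infinite-dimensional Pontrjagin--Thom collapse restricts to the finite-dimensional one for $\Delta$---is exactly the point that requires care; it is handled by the naturality statement in \cite{CK} for morphisms of such pullback squares, which the paper itself invokes freely elsewhere (e.g.\ in Proposition~\ref{prp5}).

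One simplification: the Eckmann--Hilton detour is unnecessary. The concatenation map $\gamma$ applied to $(\psi(a,m),\psi(a',m))$ is \emph{literally} $\psi(a\ast a',m)$, where $\ast$ is loop concatenation in $\Omega\mathcal{G}$; so $\gamma\circ(\psi\times_{M}\psi)=\psi\circ(\ast\times\mathrm{id}_{M})$ on the nose, with no homotopy required. Since the Pontrjagin product on $H_{*}(\Omega\mathcal{G})$ is already induced by $\ast$, you may drop the pointwise multiplication $\mu$ and the naturality-in-$m$ discussion entirely. This also removes any worry about signs coming from the interchange homotopy.
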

	\begin{proof}[Proof of Proposition \ref{ex1}]
	We denote below $\dim M$ by $n$. Let $a$ be a loop in $\mathcal{G}$ which starts and ends at the unit. By the arguments in the proof of Proposition \ref{ex2}, we should show that  $\Phi_{a*}[M]$ is the unit in $\mathbb{H}_{0}(LM)$ for the triviality of the action of $H_{0}(\Omega \mathcal{G})$.	
	By the TNCZ assumption, we have the ring isomorphism
	\begin{equation}
	\mathbb{H}_{*}(LM) \cong H_{*}(\Omega M) \otimes \mathbb{H}_{*}(M)  ,
	\end{equation}
	hence we obtain the linear isomorphism
	\begin{equation}
	H_{n}(LM) \cong \displaystyle \bigoplus _{p+q=n} H_{p}(\Omega M) \otimes H_{q}(M).
	\end{equation}
	By the assumption of $\pi_{1} M =1$, we have $H_{0}(\Omega M) \cong k$, hence $H_{0}(\Omega M)\otimes H_{n}(M)$ is a $1$ dimensional vector space, and we can put 
	\begin{equation}
	\Phi _{a*}[M] = x\cdot 1 + \displaystyle \sum _{i} y_{i}\cdot \sigma_{i} \otimes \delta_{i}, 
	\end{equation}
	where
	\begin{align*}
	1 = 1\otimes [M] \in H_{0}(\Omega M)\otimes H_{n}(M) ,\\
	\sigma_{i} \in H_{>0}(\Omega M) ,\\
	\delta_{i} \in H_{<n}(M) ,\\
	deg \sigma_{i} + deg \delta_{i} = n  ,\\
	x,y_{i} \in k  .
	\end{align*}
	We can see $x = 1$ as follows. We have a ring homomorphism induced by the evaluation map $ev_{*} : \mathbb{H}_{*}(LM) \longrightarrow \mathbb{H}_{*}(M) $. Hence we have 
	\begin{equation}
		ev_{*}(\sigma_{i}\otimes \delta_{i}) = ev_{*}(\sigma_{i}\otimes [M]) \circ ev_{*}(1\otimes \delta_{i})  .
	\end{equation}
	Because the deree of $\sigma_{i}$ is positive, the degree of $\sigma_{i} \otimes [M]$ is larger than $n$, hence $ev_{*}(\sigma_{*}\otimes [M])$ is equal to $0$. Therefore, we obtain 
	\begin{eqnarray*}
	ev_{*}\Phi _{a*}[M] & = & x\cdot ev_{*}1 + \displaystyle \sum _{i} y_{i}\cdot ev_{*}(\sigma_{i} \otimes \delta_{i})  \\
	& = &x\cdot ev_{*} 1 .
	\end{eqnarray*}
	Moreover, we have the diagram
	\begin{equation}
	\xymatrix{
	  &  &  \\
		H_{n}(M) \ar[r]^{\Phi_{a*}}  \ar@/^30pt/[rr]^{id_{*}} \ar@{}[urr]|{\circlearrowright} & H_{n}(LM) \ar[r]^{ev_{*}} & H_{n}(M) .
	}
	\end{equation}
	Thus we have 
	\begin{equation}
	 x\cdot [M] = x\cdot ev_{*}1 = ev_{*}\circ \Phi_{a*}[M] = [M] ,
	\end{equation}
	hence we obtain $x = 1$.
	Furthermore, we can show that $y_{i}$'s are all $0$ as follows. Because $\delta_{i}$'s are all nilpotent, we have 
	\begin{equation}
	(\Phi_{a*}[M] - 1)^{N} = 0  ,
	\end{equation} 
	for sufficiently large $N$. By the assumption of $|\pi_{1}\mathcal{G}| = r < \infty$ and by Lemma \ref{rng}, we have 
	\begin{equation}
	(\Phi_{a*}[M])^{r} = (\Phi_{*}'[a])^{r} = \Phi_{*}'[a]^{r} = \Phi_{*}'[1] =1.
	\end{equation}
	Since the characteristic of $k$ is coprime to $r$, the common divisor of $(\Phi_{a*}[M] - 1)^{N}$ and $(\Phi_{a*}[M])^{r} - 1$ is $\Phi_{a*}[M] - 1$. Hence we conclude that $y_{i}$'s are all 0, which implies $\Phi_{a*}[M] = 1$.
	\end{proof}
	\begin{exam}
	We consider the case $(M, \mathcal{G}, G, k) = (S^{3}, Spin(3), \pi, k)$, where $\pi = \langle a,b,c  |  a^{2} = b^{2} = c^{2} = (ab)^{2} = (bc)^{3} = (ca)^{5} \rangle$ and $k$ is an algebraic closed field whose characteristic of is not any 2,3, or 5. The finite group $\pi$ acts on $S^{3}$ and the quotient manifold $S^{3}/\pi$ is called the \emph{Poincar\'{e} homology sphere}.
	Then we have an algebra isomorphism
	\begin{eqnarray*}
		\mathbb{H}_{*}(L(S^{3}/\pi); k) &\cong & \mathbb{H}_{*}(S^{3}; k) \otimes Z(k[\pi]) \\
		& \cong & \mathbb{H}_{*}(S^{3}; k) \otimes k^{9}.
	\end{eqnarray*}
	Here we use the fact that if $k$ is an algebraic closed field and $G$ is a finite group, then $Z(k[G]) \cong k^{c(G)}$, where $c(G)$ denotes the number of conjugacy classes of $G$.
	\end{exam}
	\begin{rem}
	If $k = \mathbb{Q}$, Vigu\'{e} \cite{Vi} shows that the following are equivalent.
	\begin{itemize}
	\item[(i)] The homomorphism $H_{*}(\Omega M;k) \longrightarrow H_{*}(LM;k)$ induced by the inclusion $\Omega M \to LM$ is injective.
	\item[(ii)] $H^{*}(M;k)$ is a free graded commutative algebra.
	\end{itemize}
	\end{rem}
	\begin{rem}
	For $M = S^{n}, CP^{n}$, Menichi \cite{Me2} shows that the following are equivalent.
	\begin{itemize}
	\item[(i)] The homomorphism induced by the inclusion $H_{*}(\Omega M;k) \longrightarrow H_{*}(LM;k)$ is injective  .
	\item[(ii)] The Euler number $\chi (M) = 0$ in $k$  .
	\end{itemize}
	\end{rem}
	\begin{rem}
	The necessary condition for the free loop fibration
	\begin{equation}\label{fib}
	\Omega M \longrightarrow LM \longrightarrow M
	\end{equation}
	being the TNCZ fibration with respect to a finite field $\mathbb{F}_{p}$ is studied for many homogeneous spaces by Kuribayashi \cite{Kur}. For example, 
	\begin{itemize}
	\item For $M = CP^{n}, HP^{n}$, (\ref {fib}) is TNCZ if and only if  the Euler number $\chi(M) = 0$ in $\mathbb{F}_{p}$.
	\item For $M = SU(m+n)/SU(n), Sp(m+n)/Sp(n)$, (\ref {fib}) is TNCZ for any $m,n \geq 0, p>0$.
	\item For $SO(m+n)/SO(n)$ and $p > 2$, (\ref {fib}) is TNCZ if and only if $n$ is odd.
	\item For $SO(m+n)/SO(n)$ and $p=2$, (\ref {fib}) is TNCZ if $m \leq 4$ or $1\leq m \leq 8$ and $n \geq 43$.
	\item For $U(m+n)/U(m) \times U(n), Sp(m+n)/Sp(m) \times Sp(n
)$, (\ref {fib}) is \textit{not}  TNCZ for any $m,n \geq 0, p>0 $.
	\end{itemize}
	\end{rem}
%
%
%
%

\end{document}